\theoremstyle{plain}
\newtheorem{thm}{Theorem}[section]
\newtheorem{corollary}[thm]{Corollary}
\newtheorem{prop}[thm]{Proposition}
\theoremstyle{definition}
\newtheorem{defi}[thm]{Definition}
\newtheorem{example}[thm]{Example}
\newtheorem{remark}[thm]{Remark}
\theoremstyle{remark}
\newtheorem{claim}{Claim}
\newcommand{\cF}{\mathcal{F}}
\newcommand{\F}{\cF}
\newcommand{\cI}{\mathcal{I}}
\newcommand{\I}{\cI}
\newcommand{\FIN}{\mathsf{Fin}} 
\newcommand{\Fin}{\FIN}
\DeclareMathOperator{\FS}{\mathsf{FS}}
\DeclareMathOperator{\IP}{\mathsf{IP}}
\DeclareMathOperator{\rr}{\mathsf{r}}
\begin{document}


\title[Sets of Ramsey-limit points and IP-limit points]{Sets of Ramsey-limit points and IP-limit points}


\author[R.~Filip\'{o}w]{Rafa\l{} Filip\'{o}w}
\address[Rafa\l{}~Filip\'{o}w]{Institute of Mathematics\\ Faculty of Mathematics, Physics and Informatics\\ University of Gda\'{n}sk\\ ul.~Wita Stwosza 57\\ 80-308 Gda\'{n}sk\\ Poland}
\email{Rafal.Filipow@ug.edu.pl}
\urladdr{\url{http://mat.ug.edu.pl/~rfilipow}}

\author[A.~Kwela]{Adam Kwela}
\address[Adam Kwela]{Institute of Mathematics\\ Faculty of Mathematics\\ Physics and Informatics\\ University of Gda\'{n}sk\\ ul.~Wita  Stwosza 57\\ 80-308 Gda\'{n}sk\\ Poland}
\email{Adam.Kwela@ug.edu.pl}
\urladdr{\url{https://mat.ug.edu.pl/~akwela}}

\author[P.~Leonetti]{Paolo Leonetti}
\address[Paolo Leonetti]{Department of Economics\\ Universit\'{a} degli Studi dell’Insubria\\ via Monte Generoso 71 \\ Varese 21100\\ Italy}
\email{leonetti.paolo@gmail.com}
\urladdr{\url{https://sites.google.com/site/leonettipaolo}}

\thanks{The second-listed author was supported by the Polish National Science Centre project OPUS No. 2024/53/B/ST1/02494.}


\date{\today}


\subjclass[2020]{Primary: 03E75, 05D10, 40A05, 54A20. 
Secondary: 03E05, 11B05, 28A05, 40A35.}





\keywords{
Ramsey convergence; 
IP-convergence; 
analytic sets; 
ideal limit points; 
Hindman ideal; 
partition regular functions. 
}


\begin{abstract}
Let $X$ be an uncountable Polish space and let $\mathcal{H}$ be the Hindman ideal, that is, the family of all $S\subseteq \omega$ which are not $\IP$-sets. For each sequence $x=(x_n)_{n \in \omega}$ taking values in $X$, let $\Lambda_{x}(\FS)$ be the set of $\IP$-limit points of $x$. Also, let $\Lambda_{x}(\mathcal{H})$ be the set of $\mathcal{H}$-limit points of $x$, that is, the set of ordinary limits of subsequences $(x_n)_{n \in S}$ with $S\notin \mathcal{H}$. After proving that these two notions do not coincide in general, we show that both families of nonempty sets of the type $\Lambda_{x}(\FS)$ and of the type $\Lambda_{x}(\mathcal{H})$ are precisely the class of nonempty analytic subsets of $X$. 

An analogous result holds also for Ramsey convergence. In the proofs, we use the concept of partition regular functions introduced in 
\cite{FKK23}, 
which provide a unified approach to these types of convergence. 
\end{abstract}


\maketitle




\section{Introduction}\label{sec:intro}

For each infinite subset $D$ of the nonnegative integers $\omega$, denote by $\FS(D)$ the family of all finite nonempty sums of distinct elements of $D$.  
A set $S\subseteq \omega$ is said to be an \emph{$\IP$-set} if it contains $\FS(D)$ for some infinite $D\subseteq\omega$. 
It is known that $\IP$-sets are examples of so-called Poincar\'{e} sequences which play an important role in the study of recurrences in topological dynamics, see e.g. \cite[p. 74]{MR0603625}. 
It follows from Hindman's theorem \cite{MR0349574} that the family
\begin{equation}\label{eq:defiH}
\mathcal{H}=\left\{S\subseteq \omega: S\text{ is not an $\IP$-set}\right\}
\end{equation}
is an ideal on $\omega$, that is, a family of subsets of $\omega$ closed under taking finite unions and subsets. Hereafter, $\mathcal{H}$ will be referred to as the \emph{Hindman ideal}, see e.g. \cite[p. 109]{MR2471564}. 

Given a sequence $x=(x_n)_{n \in \omega}$ taking values in a topological space $X$, we say that $\eta \in X$ is an \emph{$\IP$-limit point} (or an $\FS$\emph{-limit point}) 
of $x$ if there exists an infinite $D\subseteq \omega$ such that for every neighborhood $U$ of $\eta$ there exists $k \in \omega$ such that $x_n \in U$ for all $n \in \FS(D\setminus k)$; equivalently, the subsequence $(x_n)_{n \in \FS(D)}$ is \textquotedblleft $\IP$-convergent\textquotedblright\, to $\eta$, see e.g. \cite{MR0531271}. The set of such points is denoted by 
\begin{equation*}
    \Lambda_{x}(\FS)=\left\{\eta \in X: \eta \text{ is an }\mathrm{IP}\text{-limit point of }x\right\}.
\end{equation*}


In another direction, for each infinite set $D\subseteq \omega$, 
let $\mathsf{r}(D)$ be the family of subsets of $D$ with cardinality $2$. It follows by Ramsey's theorem that the family  
\begin{equation}\label{eq:defiR}
\mathcal{R}=\left\{S\subseteq \mathsf{r}(\omega): 
S\text{ does not contain }\mathsf{r}(D) 
\text{ for any infinite }D\subseteq \omega\right\}
\end{equation}
is an ideal on the countably infinite set $\mathsf{r}(\omega)$.  Hereafter, $\mathcal{R}$ is called the \emph{Ramsey ideal}, see e.g. \cite{MR3019575, alcantara-phd-thesis}. 
As before, if $y=(y_{\{n,m\}})_{\{n,m\} \in \mathsf{r}(\omega)}$ is a sequence taking values in a topological space $X$, we say that $\eta \in X$ is an $\mathsf{r}$\emph{-limit point} of $y$ if there exists an infinite set $D\subseteq \omega$ such that for every neighborhood $U$ of $\eta$ there exists $k \in \omega$ such that $y_{\{n,m\}} \in U$ for all $\{n,m\} \in \mathsf{r}(D\setminus k)$; equivalently, the subsequence $(y_{\{n,m\}})_{\{n,m\} \in \mathsf{r}(D)}$ is \textquotedblleft $\rr$-convergent\textquotedblright\, to $\eta$, see e.g. \cite{MR4741322, MR2948679}. The set of such points is denoted by 
\begin{equation*}
\Lambda_{y}(\mathsf{r})=
\left\{\eta \in X: \eta \text{ is an }\rr\text{-limit point of }y\right\}.
\end{equation*}

The main aim of this work is to show that 
each of the two families of nonempty sets of the type $\Lambda_{x}(\FS)$ and of the type $\Lambda_{y}(\rr)$ coincides with the class of nonempty analytic subsets of $X$, provided that $X$ is an uncountable Polish space (see Theorem \ref{thm:mainRamsey} and Theorem \ref{thm:mainHindman} below). 
This continues the line of research in \cite{MSP24, MR3883171, FKL24, MR4393937, MR4505549}
on the study of attainable sets of limit points with respect to ideal convergence. In fact, after recalling the necessary definitions in Section \ref{sec:preliminaries}, we show that variants of our main results hold also in the context of ideal limit points for the Hindman ideal $\mathcal{H}$ and Ramsey ideal $\mathcal{R}$ defined in \eqref{eq:defiH} and \eqref{eq:defiR}, respectively. In addition, in Section \ref{sec:Plike} we provide several characterizations of $P$-like properties, and specialize them to the cases of maps $\FS$ and $\rr$ (see Corollary \ref{cor:ojshdgkjshg} below). 

Following \cite{FKK23}, we use partition regular functions and study some of their properties. This gives us a unified approach to consider our questions for the types of convergence considered here. For additional motivations, we refer the reader to \cite{FKK23}.


\section{Preliminaries}\label{sec:preliminaries}


\subsection{Notations} 

The set of all nonnegative integers is denoted by $\omega$, and each nonnegative integer $n \in \omega$ is identified with the set $\{0,1,\ldots,n-1\}$. 
Given a set $X$, we denote by 
$X^\omega$ and $X^{<\omega}$
 the families of all infinite sequences $x:\omega\to X$ 
 and the families of all finite sequences $x:n\to X$ with $n\in \omega$, respectively. 
 For a finite sequence $s\in\omega^{<\omega}$ we denote its length by $|s|$. The infinite zero sequence $(0,0,\ldots)\in\omega^\omega$ is denoted by $0^\infty$.
Given a set $X$, we denote by $[X]^\kappa$ and $[X]^{<\kappa}$ the families of all subsets of $X$ of cardinality $\kappa$ and of cardinality strictly less than $\kappa$, respectively. 
In particular, we write $\Fin(X)=[X]^{<\omega}$ to denote the family of all finite subsets of $X$.

If $X$ is a metric space with metric $d$, we denote by $B(p,r)=\{x\in X:\ d(x,p)<r\}$ and $\overline{B}(p,r)=\{x\in X:\ d(x,p)\leq r\}$ the open ball and closed ball of radius $r>0$  and centered at $p \in X$, respectively. 
Lastly, if $X$ is a Polish space (that is, a separable and completely metrizable space), we use the standard notation $\Sigma^1_1(X)$ and $\Pi^1_1(X)$ (or simply $\Sigma^1_1$ and $\Pi^1_1$ if $X$ is understood) for the families of analytic and coanalytic subsets of $X$, respectively, see e.g.~\cite{MR1321597}.


\subsection{Partition regular functions} Let us start by recalling the main definition of partition regular function from \cite[Definition 3.1]{FKK23}.
\begin{defi}\label{defi:partitionregular}
    Let $\Omega,\Psi$ be two countably infinite sets and pick a nonempty family $\mathcal{F}\subseteq [\Omega]^\omega$ such that $A\setminus K \in \mathcal{F}$ for all $A \in \mathcal{F}$ and $K \in \Fin(\Omega)$. 
    We say that a function 
    \begin{equation*}
\rho: \mathcal{F}\to [\Psi]^\omega
\end{equation*}
is \emph{partition regular} if it satisfies the following properties\textup{:}
    \begin{enumerate}[label={\rm (\roman{*})}]
\item [$\mathrm{(M)}$] $\rho(E)\subseteq \rho(F)$ for all $E,F \in \mathcal{F}$ such that $E\subseteq F$;
\item [$\mathrm{(R)}$] for every $F \in \mathcal{F}$ and $A,B \subseteq \Psi$ such that $\rho(F)=A\cup B$, there exists $E \in \mathcal{F}$ such that either $\rho(E)\subseteq A$ or $\rho(E)\subseteq B$;
\item [$\mathrm{(S)}$] for every $F \in \mathcal{F}$ there exists a subset $E\subseteq F$, $E\in \mathcal{F}$ such that for every $a \in \rho(E)$ there exists $K \in \Fin(\Omega)$ for which $a\notin \rho(E\setminus K)$. 
\end{enumerate}
\end{defi}

For each partition regular function $\rho$ as in Definition \ref{defi:partitionregular}, we associate the family
\begin{equation}\label{eq:generatedideal}
\mathcal{I}_\rho=\left\{S\subseteq \Psi: 
\forall_{F \in \mathcal{F}}\,\,\rho(F)\not\subseteq S
\right\}.
\end{equation}
It is not difficult to see that $\mathcal{I}_\rho$ is an ideal on $\Psi$, that is, a proper subset of $\mathcal{P}(\Psi)$ which contains $\Fin(\Psi)$ and is closed under taking  finite unions and subsets; 
vice versa, for each ideal $\mathcal{I}$ on $\Psi$, we have $\mathcal{I}=\mathcal{I}_{\rho_\I}$ for the partition regular function $\rho_{\mathcal{I}}: \mathcal{I}^+\to [\Psi]^\omega$ defined by 
$\rho_{\mathcal{I}}(F)=F$
where $\mathcal{I}^+=\mathcal{P}(\Psi)\setminus \mathcal{I}$; see \cite[Proposition 3.3]{FKK23} for details. 

As anticipated in Section \ref{sec:intro}, in this work we are mainly interested in two partition regular functions $\rr:[\omega]^\omega\to[[\omega]^2]^\omega$ and $\FS:[\omega]^\omega\to[\omega]^\omega$ defined by 
\begin{equation*}
\rr(D)=[D]^2
\quad \text{ and }\quad 
\FS(D)=\left\{ \sum\nolimits_{n\in\alpha}n:\alpha\in[D]^{<\omega}\setminus\{\emptyset\}\right\}
\end{equation*}
for all $D \in [\omega]^\omega$, respectively, see \cite[Theorem 3.4 and Theorem 3.6]{FKK23}. 
Following the notation in \eqref{eq:generatedideal}, the associated ideals
\begin{equation*}
\mathcal{R}=\I_{\rr}
\quad \text{ and }\quad 
\mathcal{H}=\I_{\FS}
\end{equation*}
are called \emph{Ramsey ideal} and \emph{Hindman ideal}, respectively; see \cite{MR2471564, MR3019575, alcantara-phd-thesis}. 


\subsection{Families of cluster and limit points} We now present our main definitions of $\rho$-cluster points and $\rho$-limit points. Hereafter, given a sequence $x: \Psi\to X$, we usually write $x_s=x(s)$ for each $s \in \Psi$. 

\begin{defi}\label{defi:rhoclusterpoint}
Let $x: \Psi\to X$ be a sequence in a topological space $X$ 
and pick a partition regular function $\rho: \mathcal{F}\to [\Psi]^\omega$. 
Denote by $\Gamma_x(\rho)$ the set of $\bm{\rho}$\textbf{-cluster points} of $x$, that is, the set of all $\eta \in X$ such that, for all neighborhoods $U$ of $\eta$, 
\begin{equation*}
    \exists_{F \in \mathcal{F}} \quad 
    \rho(F)\subseteq \left\{s \in \Psi: x_s \in U\right\}. 
\end{equation*}

    We also write $\mathscr{C}_X(\rho)$ for the family of sets of $\rho$-cluster points of sequences with values in $X$, together with the empty set, that is, 
\begin{equation*}
    \mathscr{C}_X(\rho)=\left\{A\subseteq X: \exists_{x \in X^\Psi}\,\,A=\Gamma_x(\rho)\right\}\cup \{\emptyset\}. 
\end{equation*}
    If the topological space $X$ is understood, we simply write $\mathscr{C}(\rho)$.
\end{defi}
Equivalently, $\eta$ is a $\rho$-cluster point of $x$ if $\left\{s \in \Psi: x_s \in U\right\}\notin \mathcal{I}_\rho$ for all neighborhoods $U$ of $\eta$, where $\I_\rho$ is the ideal generated by $\rho$ as in \eqref{eq:generatedideal}. In particular, the definition of $\rho$-cluster point depends only on the ideal generated by $\rho$, hence it makes sense to write $\Gamma_x(\I_\rho)$. In fact, if $\I$ is an ideal on $\Psi$, elements of $\Gamma_x(\I)=\Gamma_x(\rho_\I)$ are usually called $\I$-cluster points, see e.g. \cite{MR3883171, MR3920799}. It is worth noting that the case of an empty set of $\rho$-cluster points is not really interesting: indeed, it is known that, if $X$ is a metric space and $\I$ is an ideal on $\omega$, then there exists a sequence $x: \omega\to X$ such that $\Gamma_x(\I)=\emptyset$ if and only if $X$ is not compact, see \cite[Proposition 5.1]{FKL24}. 


For the next definition, let $X$ be a topological space, $\rho: \mathcal{F}\to [\Psi]^\omega$ be a partition regular function, and pick a sequence $x: \Psi\to X$. A subsequence $y$ of $x$ is said to be a $\rho$\emph{-subsequence} of $x$ if there exists $F \in \mathcal{F}$ such that $y=(x_s)_{s \in \rho(F)}$. Informally, a $\rho$-subsequence is a subsequence of $x$ with an index set which is \textquotedblleft not small\textquotedblright\, with respect to $\rho$. Accordingly, a $\rho$-subsequence $y=x\upharpoonright \rho(F)$ is said to be $\rho$\emph{-convergent to} $\eta \in X$, shortened as $\rho\text{-}\lim y=\eta$, if for every neighborhood $U$ of $\eta$ there exists $K \in \Fin(\Omega)$ such that 
$y_s \in U$ for every $s \in \rho(F\setminus K)$. 
Notice that this convergence depends on the choice of $F$. In fact, it may happen that $\rho(F)=\rho(G)$ for some $F,G \in \mathcal{F}$, yet $(y_s)_{s \in \rho(F)}$ is $\rho$-convergent while $(y_s)_{s \in \rho(G)}$ is not, see \cite{FKK23}. 

\begin{defi}\label{defi:rholimitpoint}
Let $x: \Psi\to X$ be a sequence in a topological space $X$ 
and pick a partition regular function $\rho: \mathcal{F}\to [\Psi]^\omega$. 
Denote by 
$\Lambda_x(\rho)$ the set of $\bm{\rho}$\textbf{-limit points} of $x$, that is, the set of all $\eta \in X$ such that 
\begin{equation*}
    \rho\text{-}\lim y=\eta
\end{equation*}
for some $\rho$-subsequence $y$ of $x$. We also write $\mathscr{L}_X(\rho)$ for the family of sets of $\rho$-limit points of sequences with values in $X$ together with the empty set, that is, 
    \begin{equation}\label{eq:limitpointsfamily}
    \mathscr{L}_X(\rho)=\left\{A\subseteq X: \exists_{x \in X^\Psi}\, A=\Lambda_x(\rho)\right\}\cup \{\emptyset\}. 
    \end{equation}
    If the topological space $X$ is understood, we simply write $\mathscr{L}(\rho)$. 

In the case where $\rho=\rho_{\I}$ for some ideal $\I$ on $\omega$, we define 
\begin{equation*}
\Lambda_x(\I)=\Lambda_x(\rho_{\I})
\quad \text{ and }\quad 
\mathscr{L}_X(\I)=\mathscr{L}_X(\rho_{\I}).
\end{equation*}
Elements of $\Lambda_x(\I)$ are called $\I$\emph{-limit points} of $x$. 
\end{defi}

More explicitly, $\eta$ is a $\rho$-limit point of $x$ if there exists $F \in \mathcal{F}$ such that, for each neighborhood $U$ of $\eta$, there is a finite set $K \in \Fin(\Omega)$ such that $x_s \in U$ for all $s \in \rho(F\setminus K)$. In the case where $\rho=\rho_\I$ for some ideal $\I$, it is easy to see that $\eta$ is an $\I$-limit point 
if and only if there exists $E\in \I^+$ such that the subsequence $y=x\upharpoonright E$ is convergent to $\eta$; hence, this coincides with the classical notion of $\I$-limit point studied e.g. in \cite{MSP24, MR3883171, FKL24, MR3883170}. 
Unlike the case of $\rho$-cluster points, realizing the empty set 
in \eqref{eq:limitpointsfamily} 
as a set of the form $\Lambda_x(\rho)$ is much more difficult to handle. 
Indeed, if $\I$ is an ideal on $\omega$, several implications on the existence of a sequence $x$ such that $\Lambda_x(\I)=\emptyset$ can be found in \cite[Proposition 5.3 and Figure 3]{FKL24}. Recently, significant progress on this topic was made in \cite{Gosia}. 
It is worth noting that families of $\I$-limit points and $\I$-cluster points for an arbitrary ideal $\I$ were introduced  in \cite{MR1844385}, however they were already considered in \cite{MR1181163} in the case of the ideal of all sets of asymptotic density zero.

\begin{prop}
\label{prop-lambda-ideal-in-lambda-rho}
Let $x: \Psi\to X$ be a sequence in a topological space $X$ 
and pick a partition regular function $\rho: \mathcal{F}\to [\Psi]^\omega$. Then 
\begin{equation*}
\Lambda_x({\I_\rho})\subseteq\Lambda_x(\rho)\subseteq\Gamma_x(\rho). 
\end{equation*}
\end{prop}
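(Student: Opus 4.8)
The plan is to prove the two inclusions separately, in each case unwinding the definitions of $\rho$-limit point, $\rho$-cluster point, and $\I_\rho$-limit point so as to reduce everything to the axioms (M), (R), (S) of Definition \ref{defi:partitionregular} together with the standing hypothesis that $\mathcal{F}$ is closed under removing finite subsets of $\Omega$. I would begin with the rightmost inclusion $\Lambda_x(\rho)\subseteq\Gamma_x(\rho)$, which is the easy one. Fix $\eta\in\Lambda_x(\rho)$, witnessed by some $F\in\mathcal{F}$, so that for each neighbourhood $U$ of $\eta$ there is $K\in\Fin(\Omega)$ with $x_s\in U$ for all $s\in\rho(F\setminus K)$. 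Since $\mathcal{F}$ is closed under removing finite subsets, $F\setminus K\in\mathcal{F}$, and this set serves directly as the one required in the definition of $\rho$-cluster point, because $\rho(F\setminus K)\subseteq\{s\in\Psi:x_s\in U\}$. Hence $\eta\in\Gamma_x(\rho)$.

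The leftmost inclusion $\Lambda_x(\I_\rho)\subseteq\Lambda_x(\rho)$ is where the real work lies. Take $\eta\in\Lambda_x(\I_\rho)$. Using the characterisation recalled just after Definition \ref{defi:rholimitpoint}, there is $E\in\I_\rho^+$ such that the ordinary subsequence $x\upharpoonright E$ converges to $\eta$; and by the description of $\I_\rho$ in \eqref{eq:generatedideal}, membership $E\in\I_\rho^+$ yields some $F_0\in\mathcal{F}$ with $\rho(F_0)\subseteq E$. I would then apply property (S) to $F_0$ to obtain $E'\subseteq F_0$, $E'\in\mathcal{F}$, with the property that every $a\in\rho(E')$ can be expelled from $\rho(E'\setminus K_a)$ by deleting a suitable finite $K_a\in\Fin(\Omega)$. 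Monotonicity (M) gives $\rho(E')\subseteq\rho(F_0)\subseteq E$, and I claim that $E'$ witnesses $\eta\in\Lambda_x(\rho)$.

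To verify the claim, fix a neighbourhood $U$ of $\eta$. Ordinary convergence of $x\upharpoonright E$ makes $\{s\in E:x_s\notin U\}$ finite, and since $\rho(E')\subseteq E$ the set $B=\{s\in\rho(E'):x_s\notin U\}$ is finite as well. For each $a\in B$ property (S) supplies $K_a$, and I would set $K=\bigcup_{a\in B}K_a\in\Fin(\Omega)$. As $E'\setminus K\subseteq E'\setminus K_a$, monotonicity (M) gives $\rho(E'\setminus K)\subseteq\rho(E'\setminus K_a)$, and since $a\notin\rho(E'\setminus K_a)$ we conclude $a\notin\rho(E'\setminus K)$ for every $a\in B$; thus $\rho(E'\setminus K)$ is disjoint from $B$. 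Because $\rho(E'\setminus K)\subseteq\rho(E')$ by (M), every $s\in\rho(E'\setminus K)$ then satisfies $x_s\in U$, which is precisely the condition defining $\rho\text{-}\lim(x_s)_{s\in\rho(E')}=\eta$.

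The main obstacle is this last inclusion, and more precisely the passage from \textquotedblleft cofinitely many terms along $E$ lie in $U$\textquotedblright{} (a statement about deleting finitely many elements of the index set $\Psi$) to \textquotedblleft all terms along $\rho(E'\setminus K)$ lie in $U$\textquotedblright{} (a statement about deleting finitely many elements of $\Omega$). These two notions of finite modification live on different sets, and there is no a priori reason why a single element of $\rho(E')$ should disappear after removing a finite part of $\Omega$; this is exactly what axiom (S) guarantees. Recognising that (S) must be invoked, after first replacing $F_0$ by the subset $E'$ it provides, and then combining it with (M) to expel the finitely many bad points of $B$ simultaneously through a single finite $K$, is the crux of the argument.
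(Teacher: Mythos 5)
Your proof is correct and takes essentially the same route as the paper: the easy inclusion $\Lambda_x(\rho)\subseteq\Gamma_x(\rho)$ via closure of $\mathcal{F}$ under finite deletions, and the inclusion $\Lambda_x(\I_\rho)\subseteq\Lambda_x(\rho)$ by extracting $F_0\in\mathcal{F}$ with $\rho(F_0)\subseteq E$ from \eqref{eq:generatedideal}, passing to the set supplied by axiom (S), and using monotonicity (M). The only difference is that you spell out the verification the paper dismisses as ``easy to check'' --- expelling the finitely many bad indices simultaneously via $K=\bigcup_{a\in B}K_a$ --- which is a faithful completion of the same argument, not a different one.
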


\begin{proof}
First, suppose that $\eta \in \Lambda_x({\I_\rho})$, i.e., there exists $S\in\I_\rho^+$ such that $\lim x\upharpoonright S=\eta$. Then there is $F\in\cF$ such that $\rho(F)\subseteq S$, which gives $\lim x\upharpoonright \rho(F)=\eta$. Since $\rho$ is partition regular, there is $E\in\cF$ such that $E\subseteq F$ (which implies $\rho(E)\subseteq\rho(F)$) and for all $a\in\rho(E)$ there is a finite $K\subseteq \Omega$ with $a\notin\rho(E\setminus K)$. Then it is easy to check that the $\rho$-subsequence $x\upharpoonright \rho(E)$ is $\rho$-convergent to $\eta$. Hence, $\eta\in\Lambda_x(\rho)$. This implies that $\Lambda_x({\I_\rho})\subseteq\Lambda_x(\rho)$. 

Second, suppose that $\eta\in\Lambda_x({\rho})$, i.e., there exists $F\in\cF$ such that the $\rho$-subsequence $x\upharpoonright \rho(F)$ is $\rho$-convergent to $\eta$. Then for each neighborhood $U$ of $\eta$ we can find a finite $K\subseteq \Omega$ such that $x_s\in U$ for all $s\in\rho(F\setminus K)$. Since $\rho(F\setminus K)\in\I^+_{\rho}$, we conclude that $\eta\in\Gamma_x(\rho)$. Therefore $\Lambda_x(\rho)\subseteq\Gamma_x(\rho)$. 
\end{proof}

The next example shows that $\Lambda_x(\I_\rho)$ does not coincide, in general, with $\Lambda_x(\rho)$. 
\begin{example}
For each nonzero $n \in \omega$, let $\nu_2(n)$ be the $2$-adic valuation of $n$, that is, the maximal $k \in \omega$ such that $2^k$ divides $n$, and define $S_m=\{n \in \omega\setminus \{0\}: \nu_2(n)=m\}$ for each $m \in \omega$. Set $\rho=\FS$ and $X=[0,1]$, so that $\mathcal{F}=[\omega]^\omega$ and $\mathcal{H}=\I_\rho$. We claim that there exists a sequence $x\in X^\omega$ which has no $\mathcal{I}_\rho$-limit points and, on the other hand, it admits a $\rho$-subsequence which is $\rho$-convergent. In particular, this would imply that
\begin{equation*}
\Lambda_x({\mathcal{H}})=\emptyset
\quad \text{ and }\quad 
\Lambda_x({\FS})\neq \emptyset.
\end{equation*}
We are going to show that the sequence $x=(x_n)_{n \in \omega}$ defined by $x_0=1/3$ and $x_n=2^{-\nu_2(n)}$ for each nonzero $n \in \omega$ satisfies our claim. 

To this end, we show first that $x$ has no $\mathcal{H}$-limit points. Indeed, suppose for the sake of contradiction that there exists an infinite $E\subseteq \omega$ such that $x\upharpoonright \FS(E)$ is convergent (in the ordinary sense). Since the image of $x$ equals $\{1/3\}\cup \{2^{-n}:n\in\omega\}$, there are only the following two possibilities.
\begin{enumerate}[label={\rm (\roman{*})}]
    \item The sequence $(x_n)_{n \in \FS(E)}$ is eventually constant. In other words, there exist $k,m \in \omega$ such that $\FS(E)\setminus k\subseteq S_m$. In particular, since $E\subseteq \FS(E)$, we have $E\setminus k \subseteq S_m$. Now, pick distinct $a,b \in E\setminus k$, so that $\nu_2(a)=\nu_2(b)=m$. Then $a+b \in  \FS(E)\setminus k\subseteq S_m$ and, on the other hand, $\nu_2(a+b)\ge m+1$, 
    contradicting $a+b \in S_m$. 
    
    \item The sequence $(x_n)_{n \in \FS(E)}$ is convergent to zero. Take any $a\in E\setminus\{0\}$ and pick $k\in \omega$ such that $a\in S_k$. Since $(x_n)_{n \in \FS(E)}$ converges to zero, $\FS(E)\cap S_m$ has to be finite for each $m\in\omega$. In particular, since $E\subseteq\FS(E)$, we have that $E\cap S_m$ is finite for each $m\in\omega$. Then $E\setminus\bigcup_{m\leq k}S_m$ is infinite and for each $b\in E\setminus\bigcup_{m\leq k}S_m$ we have $a+b\in\FS(E)\cap S_k$, which contradicts the fact that $\FS(E)\cap S_k$ is finite. 
\end{enumerate}

Lastly, we show that, if $F=\{2^n: n \in \omega\}$ (so that $\FS(F)=\omega\setminus \{0\}$), then the $\FS$-subsequence $y=x\upharpoonright \FS(F)$ is $\FS$-convergent to $0$. Indeed, for each $k \in \omega$ and each $n \in \FS(F\setminus 2^k)$, we have $\nu_2(n)\ge k$, hence $y_n \le 2^{-k}$. Therefore $\FS\text{-}\lim y=0$. 
\end{example}

The main goal of this work is to show that, for each nonempty analytic subset $C$ of an uncountable Polish space $X$, there exist sequences $x$ and $y$ 
taking values in $X$ which satisfy 
\begin{equation*}
C=\Lambda_{x}(\FS)=\Lambda_x({\mathcal{H}}) 
\quad \text{ and }\quad  
C=\Lambda_{y}(\rr)=\Lambda_y({\mathcal{R}}).
\end{equation*}
As an application, we obtain 
\begin{equation*}
\mathscr{L}(\FS)=\mathscr{L}(\mathcal{H})=
\mathscr{L}(\rr)=\mathscr{L}(\mathcal{R})=
\Sigma^1_1,
\end{equation*}
see Theorem \ref{thm:mainRamsey} and Theorem \ref{thm:mainHindman} below. 


\section{P-like properties}\label{sec:Plike}

It will be useful to recall the following definitions, see
\cite[Section 6]{FKK23}. 
For a partition regular function
$\rho:\cF\to[\Psi]^\omega$, we write 
$\rho(F)\subseteq^\rho B$
if there is a finite set $K\subseteq \Omega$ with $\rho(F\setminus K)\subseteq B$.
We remark that 
``$\rho(F)\subseteq^\rho B$'' 
is a relation between $F$ and $B$, and not between $\rho(F)$ and $B$ (in fact, it is possible that $\rho(F)=\rho(G)$ and $\rho(F)\subseteq^\rho B$ but $\rho(G)\not\subseteq^\rho B$, see \cite[Remark after Definition 6.3]{FKK23}).   

A partition regular function $\rho: \mathcal{F}\to [\Psi]^\omega$ is
\begin{enumerate}[label={\rm (\roman{*})}]
\item 
\emph{$P^+$} if for every $\subseteq$-decreasing sequence $(A_n)_{n\in\omega}\in (\I_{\rho}^+)^\omega$  
there exists $F\in\cF$ such that $\rho(F)\subseteq^\rho A_n$
for each $n\in\omega$;
\item 
\emph{$P^-$} if for every $\subseteq$-decreasing sequence $(A_n)_{n\in\omega}\in (\I_{\rho}^+)^\omega$ with $A_n\setminus A_{n+1}\in \I_{\rho}$ for each $n\in\omega$ 
there exists $F\in\cF$ such that $\rho(F)\subseteq^\rho A_n$
for each $n\in\omega$;
\item 
\emph{$P^{\,|}$} if for every $\subseteq$-decreasing sequence $(A_n)_{n\in\omega}\in (\I_{\rho}^+)^\omega$ with $A_n\setminus A_{n+1}\in \I_{\rho}^+$ for each $n\in\omega$ 
there exists $F\in\cF$ such that $\rho(F)\subseteq^\rho A_n$
for each $n\in\omega$.
\end{enumerate}

\begin{prop}
\label{P-like_basic}
Let $\rho: \mathcal{F}\to [\Psi]^\omega$ be a partition regular function. Then $\rho$ is $P^+$ if and only if it is both $P^{\,|}$ and $P^-$.
\end{prop}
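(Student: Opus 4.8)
The plan is to dispatch the forward implication immediately and to concentrate on showing that $P^-$ together with $P^{\,|}$ implies $P^+$. Indeed, every sequence to which $P^-$ or $P^{\,|}$ applies is in particular a $\subseteq$-decreasing sequence in $(\I_\rho^+)^\omega$, so if $\rho$ is $P^+$ then the defining conclusion holds for these sequences as well; hence $P^+$ implies both $P^-$ and $P^{\,|}$. Before attacking the converse I would record one elementary observation about the relation $\subseteq^\rho$: if $(A_n)_{n\in\omega}$ is $\subseteq$-decreasing and $(n_k)_{k\in\omega}$ is cofinal in $\omega$, then $\rho(F)\subseteq^\rho A_{n_k}$ for all $k$ already forces $\rho(F)\subseteq^\rho A_n$ for all $n$. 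This is immediate: for a given $n$ one picks $k$ with $n_k\ge n$, takes the finite $K\subseteq\Omega$ witnessing $\rho(F\setminus K)\subseteq A_{n_k}$, and uses $A_{n_k}\subseteq A_n$. The upshot is that, along a decreasing sequence, it suffices to produce a single $F\in\cF$ that works for some cofinal subsequence.

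For the converse, fix a $\subseteq$-decreasing sequence $(A_n)_{n\in\omega}\in(\I_\rho^+)^\omega$, put $D_n=A_n\setminus A_{n+1}$, and split $\omega$ according to the size of the consecutive differences by setting $L=\{n\in\omega: D_n\in\I_\rho^+\}$. The argument branches on whether $L$ is finite or infinite. If $L$ is finite, choose $N$ with $D_n\in\I_\rho$ for all $n\ge N$; the tail $(A_{N+k})_{k\in\omega}$ is $\subseteq$-decreasing in $\I_\rho^+$ with $A_{N+k}\setminus A_{N+k+1}\in\I_\rho$, so $P^-$ supplies $F\in\cF$ with $\rho(F)\subseteq^\rho A_{N+k}$ for every $k$, and the observation above extends this to all $n$. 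If instead $L$ is infinite, I would pass to a subsequence that makes every consecutive difference large: recursively choose $m_0=0$ and $m_{k+1}=\ell+1$ for some $\ell\in L$ with $\ell\ge m_k$. Telescoping gives $A_{m_k}\setminus A_{m_{k+1}}=\bigcup_{m_k\le j<m_{k+1}}D_j\supseteq D_\ell\in\I_\rho^+$, and since supersets of $\I_\rho^+$ sets again lie in $\I_\rho^+$, the sequence $(A_{m_k})_{k\in\omega}$ satisfies the hypotheses of $P^{\,|}$. Applying $P^{\,|}$ and then invoking the cofinality observation once more yields the desired $F$.

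The technically substantive point, and where I expect the real content to sit, is the dichotomy itself: recognizing that an arbitrary decreasing sequence in $\I_\rho^+$ can always be normalized, after passing to a cofinal subsequence, to one whose consecutive differences are either \emph{all} in $\I_\rho$ or \emph{all} in $\I_\rho^+$, precisely the two regimes covered by $P^-$ and $P^{\,|}$. Everything else is bookkeeping with the ideal axioms (downward closure of $\I_\rho$, equivalently upward closure of $\I_\rho^+$) together with the fact that $\subseteq^\rho$ propagates down a decreasing sequence. No structural property of $\rho$ beyond $P^-$, $P^{\,|}$, and the ideal structure of $\I_\rho$ is required, so I do not anticipate any obstruction arising from the partition-regularity clauses $\mathrm{(M)}$, $\mathrm{(R)}$, $\mathrm{(S)}$ in this particular proof.
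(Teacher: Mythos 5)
Your proof is correct and takes essentially the same route as the paper's: both split on whether the set of indices $n$ with $A_n\setminus A_{n+1}\in\I_\rho^+$ is finite or infinite, applying $P^-$ to a tail of the sequence in the finite case and $P^{\,|}$ to a subsequence chosen along the positive differences in the infinite case, using upward closure of $\I_\rho^+$ on the telescoped differences. The only cosmetic difference is that you state the cofinality observation (that $\rho(F)\subseteq^\rho A_{n_k}$ along a cofinal subsequence propagates to all $n$) explicitly, whereas the paper uses it implicitly.
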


\begin{proof}
\textsc{Only If part.} This is obvious.

\textsc{If part.} 
Suppose that $\rho$ is $P^{\,|}$ and $P^-$. Fix a $\subseteq$-decreasing sequence $(A_n)_{n\in\omega}\in (\I_{\rho}^+)^\omega$. Define 
\begin{equation*}
T=\{0\}\cup \{n\in\omega: A_n\setminus A_{n+1}\in \I_{\rho}^+\}.
\end{equation*}

If $T$ is finite, define $k=\max T$ and $B_n=A_{k+1+n}$ for all $n\in\omega$. Then $(B_n)_{n\in\omega}$ is a $\subseteq$-decreasing sequence with $B_n\setminus B_{n+1}\in \I_{\rho}$ for each $n\in\omega$. Since $\rho$ is $P^-$,
there exists $F\in\cF$ such that $\rho(F)\subseteq^\rho B_n$
for each $n\in\omega$, hence $\rho(F)\subseteq^\rho A_n$ for each $n\in\omega$.

If $T$ is infinite, let $(t_n)_{n\in\omega}$ be the increasing enumeration of $T$ and define $C_n=A_{t_n}$ for all $n\in\omega$. Then $(C_n)_{n\in\omega}$ is a $\subseteq$-decreasing sequence with $C_n\setminus C_{n+1}\supseteq A_{t_n}\setminus A_{t_n+1}\in \I^+_{\rho}$ for each $n\in\omega$. Applying the fact that $\rho$ is $P^{\,|}$, there exists $E\in\cF$ such that $\rho(E)\subseteq^\rho C_n$
for each $n\in\omega$. Also in this case $\rho(E)\subseteq^\rho A_n$ for each $n\in\omega$. Therefore $\rho$ is $P^+$.
\end{proof}

\begin{prop}
\label{P-like_for_spaces}
Let $X$ be a metric space with an accumulation point, and let  $\rho: \mathcal{F}\to [\Psi]^\omega$ be a partition regular function. Then the following hold.
\begin{enumerate}[label={\rm (\roman{*})}]
    \item $\rho$ is $P^+$ if and only if $\Lambda_x(\rho)=\Gamma_x(\rho)$ for every sequence $x\in X^\Psi$.\label{P-like_for_spaces:+}
    
    \item $\rho$ is $P^{\,|}$ if and only if $\Lambda_x(\rho)$ is closed for every sequence $x\in X^\Psi$.\label{P-like_for_spaces:|}
    
    \item Suppose that $X$ is locally compact. Then $\rho$ is $P^-$ if and only if each isolated point in $\Gamma_x(\rho)$ belongs to $\Lambda_x(\rho)$ for every sequence $x\in X^\Psi$.\label{P-like_for_spaces:-}
\end{enumerate}
\end{prop}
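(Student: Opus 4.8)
The plan is to reduce all three equivalences to a single dictionary between $\subseteq$-decreasing sequences in $\mathcal{I}_\rho^+$ and neighborhood bases. Recall from Proposition \ref{prop-lambda-ideal-in-lambda-rho} that $\Lambda_x(\rho)\subseteq\Gamma_x(\rho)$ always holds, and that, directly from the definitions, $\eta\in\Gamma_x(\rho)$ means $\{s:x_s\in U\}\in\mathcal{I}_\rho^+$ for every neighborhood $U$ of $\eta$, while $\eta\in\Lambda_x(\rho)$ means there is $F\in\mathcal{F}$ with $\rho(F)\subseteq^\rho\{s:x_s\in U\}$ for every such $U$. Fixing $r_n\downarrow 0$ and writing $A_n=\{s:x_s\in B(\eta,r_n)\}$, these $A_n$ form a $\subseteq$-decreasing sequence; then $\eta\in\Gamma_x(\rho)$ is equivalent to $A_n\in\mathcal{I}_\rho^+$ for all $n$, and $\eta\in\Lambda_x(\rho)$ is equivalent to the existence of $F$ with $\rho(F)\subseteq^\rho A_n$ for all $n$ --- which is exactly the conclusion of the three $P$-properties (using that $\subseteq^\rho$ is monotone in its second argument, and that the $B(\eta,r_n)$ form a base). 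The content of each part is thus to match the hypothesis placed on the differences $A_n\setminus A_{n+1}$ with the topological behaviour of $\eta$.

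For the \emph{only if} directions I would feed a suitable sequence $A_n$ into the appropriate $P$-property. In (i) the only thing to prove is $\Gamma_x(\rho)\subseteq\Lambda_x(\rho)$: given $\eta\in\Gamma_x(\rho)$, the sequence $A_n$ above lies in $\mathcal{I}_\rho^+$, and $P^+$ (which constrains no differences) returns the desired $F$. In (ii), to see that $\Lambda_x(\rho)$ is closed I would take $\eta$ in its closure, choose $\eta_k\in\Lambda_x(\rho)$ with $\eta_k\to\eta$ and $d(\eta_k,\eta)$ strictly decreasing, and then pick $r_n\downarrow 0$ with $r_{n+1}<d(\eta_n,\eta)<r_n$ together with $\epsilon_n>0$ such that $B(\eta_n,\epsilon_n)\subseteq B(\eta,r_n)\setminus\overline{B}(\eta,r_{n+1})$. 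Then $\{s:x_s\in B(\eta_n,\epsilon_n)\}\subseteq A_n\setminus A_{n+1}$, and since $\eta_n\in\Lambda_x(\rho)\subseteq\Gamma_x(\rho)$ the left-hand side is in $\mathcal{I}_\rho^+$; hence every difference is $\mathcal{I}_\rho$-positive, so $P^{\,|}$ applies and gives $\eta\in\Lambda_x(\rho)$.

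Part (iii) is where I expect the real work. The key is a compactness lemma I would prove first: if $K\subseteq X$ is compact and $\{s:x_s\in K\}\in\mathcal{I}_\rho^+$, then $\Gamma_x(\rho)\cap K\neq\emptyset$. Indeed, if no point of $K$ were a $\rho$-cluster point, each $z\in K$ would admit a neighborhood $U_z$ with $\{s:x_s\in U_z\}\in\mathcal{I}_\rho$; extracting a finite subcover and using that $\mathcal{I}_\rho$ is closed under finite unions would force $\{s:x_s\in K\}\in\mathcal{I}_\rho$, a contradiction. Now, given $\eta$ isolated in $\Gamma_x(\rho)$, local compactness lets me choose $r_0$ with $\overline{B}(\eta,r_0)$ compact and $\overline{B}(\eta,r_0)\cap\Gamma_x(\rho)=\{\eta\}$; setting $A_n=\{s:x_s\in\overline{B}(\eta,r_0/2^n)\}$, if some difference $A_n\setminus A_{n+1}$ were $\mathcal{I}_\rho$-positive then the lemma applied to the compact annulus $K_n=\{z:r_0/2^{n+1}\le d(z,\eta)\le r_0/2^n\}$ would produce a $\rho$-cluster point in $\overline{B}(\eta,r_0)$ distinct from $\eta$, contradicting isolation. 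Hence all differences lie in $\mathcal{I}_\rho$ and $P^-$ yields $\eta\in\Lambda_x(\rho)$. This compactness step, and the fact that it is precisely where local compactness enters, is the main obstacle.

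For the \emph{if} directions I would use one uniform encoding. Fix an accumulation point $\eta$, distinct points $p_n\neq\eta$ with $d(p_n,\eta)$ strictly decreasing to $0$, and a point $q$ with $d(q,\eta)\ge 1$; given $\subseteq$-decreasing $(A_n)\in(\mathcal{I}_\rho^+)^\omega$, define $x_s=\eta$ for $s\in\bigcap_n A_n$, $x_s=p_n$ for $s\in A_n\setminus A_{n+1}$, and $x_s=q$ for $s\notin A_0$. Then for small $r$ the set $\{s:x_s\in B(\eta,r)\}$ equals $A_N$ for some $N=N(r)\to\infty$, so $\eta\in\Gamma_x(\rho)$, while $\{s:x_s\in B(p_n,\epsilon)\}=A_n\setminus A_{n+1}$ for small $\epsilon$. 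The three hypotheses on the differences now toggle the picture: with no hypothesis, $\Lambda_x(\rho)=\Gamma_x(\rho)$ immediately gives $\eta\in\Lambda_x(\rho)$ (part (i)); with $A_n\setminus A_{n+1}\in\mathcal{I}_\rho^+$ every $p_n$ belongs to $\Lambda_x(\rho)$, so $\eta=\lim p_n$ lies in the closed set $\Lambda_x(\rho)$ (part (ii)); and with $A_n\setminus A_{n+1}\in\mathcal{I}_\rho$ no $p_n$ is a $\rho$-cluster point, so $\eta$ is isolated in $\Gamma_x(\rho)$ and hence belongs to $\Lambda_x(\rho)$ (part (iii)). In every case $\eta\in\Lambda_x(\rho)$ unwinds to an $F$ with $\rho(F)\subseteq^\rho A_n$ for all $n$, which is the required $P$-property.
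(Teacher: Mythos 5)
Your proof is correct and follows essentially the same route as the paper's: the same test sets $A_n=\{s\in\Psi: x_s\in B(\eta,r_n)\}$ drive the \textsc{only if} directions, and the same three-valued encoding sequence (constant $p_n$ on $A_n\setminus A_{n+1}$, $\eta$ on $\bigcap_n A_n$, a junk value off $A_0$) drives the converses. Two presentational differences are worth recording. First, in the converse directions the paper argues by contraposition: from a witness to the failure of the relevant $P$-property it verifies directly that $\eta\in\Gamma_x(\rho)\setminus\Lambda_x(\rho)$ (resp.\ that $\Lambda_x(\rho)$ fails to be closed), which requires checking that no $\rho$-subsequence $\rho$-converges to $\eta$; you instead prove the direct implication and extract the required $F$ by ``unwinding'' $\eta\in\Lambda_x(\rho)$, using that the preimage of a small ball around $\eta$ equals some $A_N$ with $N\to\infty$ as the radius shrinks. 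This is the same construction read in the opposite direction, and it is arguably a bit cleaner. Second, your compactness lemma in (iii) --- a compact set whose preimage is $\I_\rho$-positive must meet $\Gamma_x(\rho)$ --- is exactly the content of the result the paper imports from the literature (\cite[Lemma 3.1(vi)]{MR3920799}), and your finite-subcover proof of it is correct, so your argument is more self-contained at that point. One genuine but trivially repairable slip: you require a point $q$ with $d(q,\eta)\geq 1$, which need not exist when $X$ is bounded (e.g.\ $X=[0,1]$, a space the paper itself uses); take $q=p_0$, as the paper does with its $y_0$. With that choice, in part (iii) the preimage of a small ball around $p_0$ becomes $(\Psi\setminus A_0)\cup(A_0\setminus A_1)$ and may be $\I_\rho$-positive, but this is harmless: $p_0$ lies at fixed positive distance from $\eta$, so showing $p_n\notin\Gamma_x(\rho)$ for $n\geq 1$ still isolates $\eta$ in $\Gamma_x(\rho)$, exactly as in the paper. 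Finally, in the \textsc{only if} direction of (ii) you should state the (trivial) reduction to the case $\eta\notin\Lambda_x(\rho)$, since otherwise the points $\eta_k$ with $d(\eta_k,\eta)$ strictly decreasing and positive cannot be chosen; the paper makes this reduction explicit.
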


\begin{proof}
In the following proofs, let $d$ be the metric on $X$. 

\ref{P-like_for_spaces:+} 
\textsc{Only If part.} 
Fix a sequence $x=(x_s)_{s\in\Psi}$ in $X$. By Proposition \ref{prop-lambda-ideal-in-lambda-rho} we have $\Lambda_x(\rho)\subseteq\Gamma_x(\rho)$, so we only need to show that $\Gamma_x(\rho)\subseteq \Lambda_x(\rho)$. If $\Gamma_x(\rho)=\emptyset$, the claim is clear. Otherwise, fix $p\in \Gamma_x(\rho)$. Then for every $n\in\omega$ we have $A_n=\{s\in \Psi:\ d(x_s,p)<2^{-n}\}\in\I_\rho^+$. Moreover, $(A_n)_{n\in\omega}$ is $\subseteq$-decreasing. Since $\rho$ is $P^+$, there is $F\in\cF$ such that $\rho(F)\subseteq^\rho A_n$ for all $n\in\omega$. Now, it is enough to show that $(x_s)_{s\in\rho(F)}$ is $\rho$-convergent to $p$. To this aim, let $U$ be a neighborhood of $p$ and pick $n\in\omega$ such that $B(p,2^{-n})\subseteq U$. Then there is a finite set $K\subseteq\Omega$ such that $\rho(F\setminus K)\subseteq A_n$. It follows that $x_s\in B(p,2^{-n})\subseteq U$ for all $s\in \rho(F\setminus K)$, hence $(x_s)_{s\in\rho(F)}$ is $\rho$-convergent to $p$.

\textsc{If part.} Let us suppose that $\rho$ is not $P^+$, and let $(A_n)_{n\in\omega}\in (\I_{\rho}^+)^\omega$ be a $\subseteq$-decreasing sequence such that for every $F\in\cF$ we have $\rho(F)\not\subseteq^\rho A_n$ for some $n\in\omega$. Let also $p$ be an accumulation point of $X$ and pick an injective sequence $(y_n)_{n\in\omega}$ such that the sequence $(\delta_n)_{n\in\omega}$ given by $\delta_n=d(y_n,p)$ is decreasing and tends to zero. Define the sequence $x=(x_s)_{s\in\Psi}$ by 
\begin{equation*}
x_{s}=\begin{cases}
    \,y_0 &\text{if }s\in\Psi\setminus A_0,\\
    \,p &\text{if }s\in\bigcap_{n\in\omega}A_n,\\
    \,y_n &\text{if }s\in A_n\setminus A_{n+1}.
\end{cases}
\end{equation*}
We will show that $p\in\Gamma_x(\rho)$ and that $p\notin\Lambda_x(\rho)$. 
 
In fact, let $U$ be a neighborhood of $p$ and find $n\in\omega$ such that the closed ball $\overline{B}(p,\delta_n)$ is contained in $U$. Then $x_s\in \overline{B}(p,\delta_n)\subseteq U$ for all $s\in A_n$. Since $A_n\in\I_\rho^+$, we obtain that $p\in\Gamma_x(\rho)$.

Lastly, to prove that $p\notin\Lambda_x(\rho)$, fix $F\in\cF$. We are going to show that the subsequence $(x_s)_{s\in\rho(F)}$ does not $\rho$-converge to $p$. By the choice of $(A_n)_{n\in\omega}$, there is $n\in\omega$ such that $\rho(F\setminus K)\not\subseteq A_n$ for all finite $K\subseteq\Omega$. Define $U=B(p,\delta_n)$. Then $U$ is a neighborhood of $p$. Fix a finite set $K\subseteq\Omega$. There is $s\in \rho(F\setminus K)\setminus A_n$. Then $s\in \rho(F\setminus K)$ and $x_s\in\{y_0,y_1,\ldots,y_{n-1}\}$, so $x_s\notin U$. By the arbitrariness of $K$, we conclude that $(x_s)_{s\in\rho(F)}$ does not $\rho$-converge to $p$.

\ref{P-like_for_spaces:|} 
\textsc{Only If part.} Fix a sequence $x=(x_s)_{s\in\Psi}$ in $X$ and let $(y_n)_{n\in\omega}$ be a sequence in $\Lambda_x(\rho)$ converging to some $p\in X$. Then for each $n\in\omega$ there is $F_n\in\cF$ such that the $\rho$-subsequence $(x_s)_{s\in\rho(F_n)}$ is $\rho$-convergent to $y_n$. We need to show that $p\in\Lambda_x(\rho)$. This is clear if $p=y_n$ for some $n\in\omega$, so we can assume that $p\neq y_n$ for all $n\in\omega$. Then it is possible to find a decreasing sequence $(\delta_n)_{n\in\omega}$ of positive reals such that $\lim_{n}\delta_n=0$ and for each $n\in\omega$ there is $k\in\omega$ such that $d(y_k,p)\in (\delta_{n+1},\delta_n)$. Define $A_n=\{s\in \Psi:\ d(x_s,p)<\delta_n\}$ for all $n\in\omega$. Then $(A_n)_{n\in\omega}$ is $\subseteq$-decreasing. Moreover, for every $n\in\omega$ we have $A_n\setminus A_{n+1}\in\I^+_\rho$. Indeed, if $n\in\omega$ then there is $k\in\omega$ such that $\delta_{n+1}<d(y_k,p)<\delta_n$, so $U=B(p,\delta_n)\setminus\overline{B}(p,\delta_{n+1})=\{x\in X:\ \delta_{n+1}<d(x,p)<\delta_n\}$ is a neighborhood of $y_k$ and we can find a finite $K\subseteq\Omega$ such that $x_s\in U$ for all $s\in\rho(F_k\setminus K)$. Thus, $\rho(F_k\setminus K)\subseteq A_n\setminus A_{n+1}$ and consequently $A_n\setminus A_{n+1}\in\I^+_\rho$ (in particular, $A_n\in\I^+_\rho$).

Since $\rho$ is $P^{\,|}$, there is $F\in\cF$ such that $\rho(F)\subseteq^\rho A_n$ for all $n\in\omega$. We will show that $(x_s)_{s\in\rho(F)}$ is $\rho$-convergent to $p$. Let $U$ be a neighborhood of $p$ and find $n\in\omega$ such that the ball $B(p,\delta_n)$ is contained in $U$. Then there is a finite set $K\subseteq\Omega$ such that $\rho(F\setminus K)\subseteq A_n$. It follows that $x_s\in B(p,\delta_n)\subseteq U$ for all $s\in \rho(F\setminus K)$, hence $(x_s)_{s\in\rho(F)}$ is $\rho$-convergent to $p$. Therefore $p \in \Lambda_x(\rho)$.

\textsc{If part.} Assume that $\rho$ is not $P^{\,|}$. Hence it is possible to fix a $\subseteq$-decreasing sequence of sets $(A_n)_{n\in\omega}\in (\I_{\rho}^+)^\omega$ such that
\begin{itemize}
    \item $A_n\setminus A_{n+1}\in\I^+_\rho$ for all $n\in\omega$,
    \item for every $F\in\cF$ there is $n\in\omega$ such that $\rho(F)\not\subseteq^\rho A_n$.
\end{itemize}  
Let also $p$ be an accumulation point of $X$ and pick an injective sequence $(y_n)_{n\in\omega}$ such that the sequence $(\delta_n)_{n\in\omega}$ defined by $\delta_n=d(y_n,p)$ is strictly decreasing and has limit zero. At this point, define the sequence $x=(x_s)_{s\in\Psi}$ by 
\begin{equation*}
x_{s}=\begin{cases}
    \,y_0 &\text{if }s\in\Psi\setminus A_0,\\
    \,p &\text{if }s\in\bigcap_{n\in\omega}A_n,\\
    \,y_n &\text{if }s\in A_n\setminus A_{n+1}.
\end{cases}
\end{equation*}
To complete the proof, it will be enough to show that 
\begin{equation}\label{eq:claimPropostion24ii}
\{y_n:\ n\in\omega\}\subseteq\Lambda_x(\rho)
\quad \text{ and }\quad 
p\notin\Lambda_x(\rho).
\end{equation}
To this aim, fix $n \in \omega$. 
Then, $A_n\setminus A_{n+1}\in\I^+_\rho$, hence there is $F\in\cF$ such that $\rho(F)\subseteq A_n\setminus A_{n+1}$. Since $x_s=y_n$ for all $s\in\rho(F)$, the $\rho$-subsequence $(x_s)_{s\in\rho(F)}$ is $\rho$-convergent to $y_n$. This proves the first part of \eqref{eq:claimPropostion24ii}. The second part of \eqref{eq:claimPropostion24ii} 
can be shown similarly as in the proof of the \textsc{If} part of item \ref{P-like_for_spaces:+}.

\ref{P-like_for_spaces:-} 
\textsc{Only If part.} 
Fix a sequence $x=(x_s)_{s\in\Psi}$ in $X$ and let $p$ be an isolated point of $\Gamma_x(\rho)$. Taking into account that $X$ is locally compact, it is possible to fix an integer $n_0\in\omega$ such that $\overline{B}(p,2^{-n_0})$ is compact and $\overline{B}(p,2^{-n_0})\cap\Gamma_x(\rho)=\{p\}$. Since $p\in\Gamma_x(\rho)$, 
for every $n\in\omega$ 
we have that 
\begin{equation*}
A_n=\{s\in\Psi:\ x_s\in \overline{B}(p,2^{-(n_0+n)})\}\in\I^+_\rho.
\end{equation*}
In addition, $(A_n)_{n\in\omega}$ is $\subseteq$-decreasing. We are going to show that $A_n\setminus A_{n+1}\in\I_\rho$ for all $n\in\omega$. Indeed, suppose for the sake of contradiction that there is $n\in\omega$ such that $A_n\setminus A_{n+1}\in \I_\rho^+$.
Then $Z=\overline{B}(p,2^{-(n_0+n)})\setminus B(p,2^{-(n_0+n+1)})$ is compact and $x_s\in Z$ for all $s\in A_n\setminus A_{n+1}\in\I^+$. Thanks to 
\cite[Lemma 3.1(vi)]{MR3920799}, 
it is possible to fix a point  $p'\in\Gamma_x(\rho)\cap Z$. Then $p'\neq p$ and $p'\in Z\subseteq \overline{B}(p,2^{-(n_0+n)})\subseteq \overline{B}(p,2^{-n_0})$. This contradicts the choice of $n_0$. 

Since $\rho$ is $P^-$, there is $F\in\cF$ such that $\rho(F)\subseteq^\rho A_n$ for all $n\in\omega$. Now, we are going to show that $(x_s)_{s\in\rho(F)}$ is $\rho$-convergent to $p$. To this aim, let $U$ be a neighborhood of $p$ and find $n\in\omega$ such that the closed ball $\overline{B}(p,2^{-(n_0+n)})$ is contained in $U$. There is a finite set $K\subseteq\Omega$ such that $\rho(F\setminus K)\subseteq A_n$. Then $x_s\in \overline{B}(p,2^{-(n_0+n)})\subseteq U$ for all $s\in \rho(F\setminus K)$, so $(x_s)_{s\in\rho(F)}$ is $\rho$-convergent to $p$. 

\textsc{If part.}
Assume that $\rho$ is not $P^-$ and let $(A_n)_{n\in\omega}\in (\I_{\rho}^+)^\omega$ be a $\subseteq$-decreasing sequence such that:
\begin{itemize}
    \item $A_n\setminus A_{n+1}\in\I_\rho$ for all $n\in\omega$,
    \item for every $F\in\cF$ there is $n\in\omega$ such that $\rho(F)\not\subseteq^\rho A_n$.
\end{itemize}  
Let also $p$ be an accumulation point of $X$ and pick an injective sequence $(y_n)_{n\in\omega}$ such that the sequence $(\delta_n)_{n\in\omega}$ given by $\delta_n=d(y_n,p)$ is decreasing and tends to zero. Define $x=(x_s)_{s\in\Psi}$ by 
\begin{equation*}
x_{s}=\begin{cases}
    \,y_0 &\text{if }s\in\Psi\setminus A_0,\\
    \,p &\text{if }s\in\bigcap_{n\in\omega}A_n,\\
    \,y_n &\text{if }s\in A_n\setminus A_{n+1}.
\end{cases}
\end{equation*}
Note that $\Gamma_x(\rho)\subseteq\{y_n:\ n\in\omega\}\cup\{p\}$. To finish the proof, we will show that $p\in\Gamma_x(\rho)$, $y_n\notin\Gamma_x(\rho)$ for all $n\ge 1$ (so that $p$ is an isolated point of $\Gamma_x(\rho)$) and that $p\notin\Lambda_x(\rho)$.

To this aim, let $U$ be a neighborhood of $p$ and find $n\in\omega$ such that the ball $\overline{B}(p,\delta_n)$ is contained in $U$. Then $x_s\in \overline{B}(p,\delta_n)\subseteq U$ for all $s\in A_n$. Since $A_n\in\I_\rho^+$, we obtain that $p\in\Gamma_x(\rho)$.

In addition, pick an integer $n\ge 1$ and 
find a neighborhood $U$ of $y_n$ such that $p\notin U$ and $y_k\notin U$ for every $k\neq n$. Then $\{s\in\Psi:\ x_s\in U\}=A_n\setminus A_{n+1}\in\I_\rho$. Hence $y_n\notin\Gamma_x(\rho)$. 

Lastly, the proof that $p\notin\Lambda_x(\rho)$ goes similarly as in the \textsc{If} part of item \ref{P-like_for_spaces:+}.
\end{proof}

\begin{remark}
The assumption about local compactness in Proposition \ref{P-like_for_spaces}\ref{P-like_for_spaces:-} cannot be dropped. Indeed, in \cite[Example 3.8]{FKL24} it is shown that there are an ideal $\I$, a space $X$, and a sequence $x\in X^\omega$ such that
\begin{itemize}
    \item $X$ is metric with an accumulation point, but it is not locally compact,
    \item $\I$ is $P^-$, which is equivalent to $\rho_\I$ being $P^-$ (by \cite[Proposition 6.5(2)]{FKK23}),
    \item there is an isolated point of $\Gamma_x(\I)$ which does not belong to $\Lambda_x(\I)$.
\end{itemize}
Since $\Gamma_x(\rho_\I)=\Gamma_x(\I_{\rho_\I})=\Gamma_x(\I)$ and $\Lambda_x(\I)=\Lambda_x(\rho_\I)$, we get an example of a metric space $X$ with an accumulation point, a sequence $x$ in $X$ and a $P^-$ partition regular function $\rho_\I$ such that there is an isolated point of $\Gamma_x(\I_{\rho_\I})$ which does not belong to $\Lambda_x(\rho_\I)$.
\end{remark}

\begin{corollary}\label{cor:ojshdgkjshg}
Let $X$ be a metric space with an accumulation point. Then the following hold. 
\begin{enumerate}[label={\rm (\roman{*})}]
\item $\Lambda_x(\rr)\neq\Gamma_x(\rr)$  for some sequence $x\in X^{[\omega]^2}$\textup{.}
\item $\Lambda_x(\rr)$ is not closed for some sequence $x\in X^{[\omega]^2}$\textup{.}
\item If $X$ is locally compact, then each isolated point in $\Gamma_x(\rr)$ belongs to $\Lambda_x(\rr)$ for every $x\in X^{[\omega]^2}$.
\end{enumerate}
Similarly, the following hold.
\begin{enumerate}[label={\rm (\roman{*}$^\prime$)}]
\item $\Lambda_x(\FS)\neq\Gamma_x(\FS)$ for some sequence $x\in X^{\omega}$\textup{.}
\item $\Lambda_x(\FS)$ is not closed for some sequence $x\in X^{\omega}$\textup{.}
\item If $X$ is locally compact, then each isolated point in $\Gamma_x(\FS)$ belongs to $\Lambda_x(\FS)$ for every $x\in X^{\omega}$.
\end{enumerate}
\end{corollary}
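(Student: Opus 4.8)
The plan is to read the six assertions as the specializations of Proposition \ref{P-like_for_spaces} to the two partition regular functions $\rr$ and $\FS$, thereby reducing everything to the determination of the $P$-like properties of these functions. Indeed, by Proposition \ref{P-like_for_spaces}\ref{P-like_for_spaces:+} item (i) (resp. (i$'$)) is equivalent to ``$\rr$ (resp. $\FS$) is not $P^+$''; by Proposition \ref{P-like_for_spaces}\ref{P-like_for_spaces:|} item (ii) (resp. (ii$'$)) is equivalent to ``$\rr$ (resp. $\FS$) is not $P^{\,|}$''; and by Proposition \ref{P-like_for_spaces}\ref{P-like_for_spaces:-} item (iii) (resp. (iii$'$)) is equivalent to ``$\rr$ (resp. $\FS$) is $P^-$''. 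Since, by Proposition \ref{P-like_basic}, $P^+$ implies $P^{\,|}$, a function that fails $P^{\,|}$ also fails $P^+$; hence the whole statement follows once we show that both $\rr$ and $\FS$ are $P^-$ but not $P^{\,|}$.

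For the positive part, that $\rr$ and $\FS$ are $P^-$, I would argue directly from Ramsey's and Hindman's theorems. Fix a $\subseteq$-decreasing sequence $(A_n)_{n\in\omega}$ in $\I_\rho^+$ with $A_n\setminus A_{n+1}\in\I_\rho$ for all $n$. For $\rho=\FS$, each $A_n$ is an $\IP$-set, so by the Galvin--Glazer theorem there is an idempotent ultrafilter $p\in\beta\omega$ with $A_0\in p$; since no member of an idempotent ultrafilter lies in $\mathcal{H}$ and $A_n\setminus A_{n+1}\in\mathcal{H}$, we get $A_{n}\in p\Rightarrow A_{n+1}\in p$, whence $A_n\in p$ for every $n$. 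The standard recursive construction of finite-sum sets inside members of $p$ then yields $F=\{f_0<f_1<\cdots\}$ with $\sum_{i\in\alpha}f_i\in A_{\min\alpha}$ for every finite nonempty $\alpha$; taking $K_n=\{f_0,\dots,f_{n-1}\}$ gives $\FS(F\setminus K_n)\subseteq A_n$, i.e. $\FS(F)\subseteq^{\FS}A_n$ for all $n$. For $\rho=\rr$ the analogous recursion chooses an increasing sequence $F=\{f_0<f_1<\cdots\}$ together with a shrinking infinite reservoir started inside a clique $[\,D_0\,]^2\subseteq A_0$, so that $\{f_i,f_j\}\in A_{\min(i,j)}$; here the hypothesis $A_n\setminus A_{n+1}\in\mathcal{R}$ is exactly what guarantees that within the reservoir the complement of each $A_n$ contains no infinite clique, so a vertex of infinite $A_n$-degree is always available. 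This produces $[F]^2\subseteq^{\rr}A_n$ for all $n$.

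For the negative part, that neither function is $P^{\,|}$, I would exhibit explicit bad sequences. For $\rr$, fix a partition $\omega=\bigsqcup_{k\in\omega}H_k$ into infinite sets and set $A_n=\bigcup_{k\geq n}[H_k]^2$. Then $(A_n)$ is $\subseteq$-decreasing, $A_n\supseteq[H_n]^2\in\I_\rr^+$, and $A_n\setminus A_{n+1}=[H_n]^2\in\I_\rr^+$; yet a pair lies in $A_n$ only if its two endpoints share a single block $H_k$ with $k\geq n$, so $[F\setminus K]^2\subseteq A_n$ forces $F\setminus K$ to lie in one block, which cannot hold for cofinally many $n$ once $F$ is infinite. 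Hence no $F$ satisfies $[F]^2\subseteq^{\rr}A_n$ for all $n$. For $\FS$, partition $\omega$ (viewed as the set of binary positions) into infinite blocks $P_0,P_1,\dots$, let $G_k=\{2^p:p\in P_k\}$, and set $A_n=\bigcup_{k\geq n}\FS(G_k)=\{m>0:\operatorname{supp}(m)\subseteq P_k\text{ for some }k\geq n\}$, where $\operatorname{supp}(m)$ denotes the binary support of $m$. Again $(A_n)$ is $\subseteq$-decreasing with $A_n\supseteq\FS(G_n)\in\I_\FS^+$ and $A_n\setminus A_{n+1}=\FS(G_n)\in\I_\FS^+$. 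If $\FS(F\setminus K)\subseteq A_n$, then for any $a,b\in F\setminus K$ with supports in different blocks the sum $a+b$ has support meeting two blocks (the supports being disjoint, there is no carry), so $a+b\notin A_n$; thus all of $F\setminus K$ is supported in a single block, which again is impossible for cofinally many $n$. So $\FS$ is not $P^{\,|}$, completing the argument.

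I expect the negative part to be the main obstacle: the delicate point is to arrange the difference sets $A_n\setminus A_{n+1}$ to be large (in $\I_\rho^+$, as required by $P^{\,|}$) while the ``single-block'' rigidity of $A_n$ still prevents any common $\subseteq^\rho$-pseudo-intersection $F$. For $\FS$ this additionally requires choosing the blocks so that cross-block sums are detectable, which the binary-support presentation handles automatically. (These $P$-like classifications of $\rr$ and $\FS$ are also available in \cite{FKK23}, from which the corollary could instead be quoted directly.)
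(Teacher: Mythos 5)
Your proof is correct, and it follows the same reduction skeleton as the paper (translate the six items into $P$-like properties of $\rr$ and $\FS$ via Proposition \ref{P-like_for_spaces}), but it diverges in how the classification itself is established: the paper simply quotes \cite[Proposition 6.7]{FKK23} for ``$\rr$ and $\FS$ are $P^-$ but not $P^+$'' and then invokes the nontrivial direction of Proposition \ref{P-like_basic} to deduce failure of $P^{\,|}$, whereas you prove everything from scratch and run the logic the other way: you establish failure of $P^{\,|}$ directly by explicit block-partition counterexamples ($A_n=\bigcup_{k\geq n}[H_k]^2$ for $\rr$, and the binary-support version $A_n=\bigcup_{k\geq n}\FS(G_k)$ for $\FS$), from which failure of $P^+$ follows by the trivial implication $P^+\Rightarrow P^{\,|}$, so you never need the substantive ``if'' part of Proposition \ref{P-like_basic}. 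Both counterexamples check out: in each case $\FS(F\setminus K)\subseteq A_n$ (resp. $[F\setminus K]^2\subseteq A_n$) forces a cofinite part of $F$ into a single block --- your no-carry observation for disjoint binary supports is exactly the point, playing the role of the ``very sparse'' sets used elsewhere in the paper --- and a single infinite $F$ cannot sit in blocks of arbitrarily high index. Your positive part ($P^-$) is also sound, though stated at ``standard construction'' level of detail: for $\FS$, the Galvin--Glazer argument (an idempotent $p\ni A_0$ must contain every $A_n$ since $A_n\setminus A_{n+1}\in\mathcal{H}$ is in no idempotent, followed by the diagonal $A^\star$-recursion giving $\sum_{i\in\alpha}f_i\in A_{\min\alpha}$) is exactly the known proof; for $\rr$, the cleanest rendering of your ``shrinking reservoir'' sketch is a nested Ramsey argument: since $[D_m]^2\setminus A_{m+1}\subseteq A_0\setminus A_{m+1}\in\mathcal{R}$ contains no infinite clique, the Ramsey alternative inside $D_m$ must produce an infinite $D_{m+1}\subseteq D_m$ with $[D_{m+1}]^2\subseteq A_{m+1}$, and a diagonal choice $f_n\in D_n$ gives $\{f_i,f_j\}\in A_{\min(i,j)}$; spelling that out would tighten the one compressed spot in your write-up. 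What the two routes buy: the paper's proof is three lines but opaque without \cite{FKK23}; yours is self-contained and exposes the combinatorial content (idempotent ultrafilters, Ramsey diagonalization, and the single-block rigidity behind the failure of $P^{\,|}$), at the cost of length --- and, as you note yourself, the citation route remains available as a shortcut.
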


\begin{proof}
Thanks to \cite[Proposition 6.7]{FKK23}, both partition regular functions $\rr$ and $\FS$ are $P^-$, but not $P^+$. Then $\rr$ and $\FS$ are not $P^{\,|}$ by Proposition \ref{P-like_basic}. To conclude, all claims follow by Proposition \ref{P-like_for_spaces}.
\end{proof}


\section{Main results}\label{sec:mainresults}

In this section, we 
identify a set $A\subseteq\Omega$  with its characteristic function $\mathbf 1_A$,
and 
view $2^\Omega$  as  the product space $\{0,1\}^\Omega$ with the discrete topology on $\{0,1\}$. Then $2^\Omega$ is a Polish space. The set  $[\Omega]^\omega$ is a $G_\delta$ subset of $ 2^\Omega$, hence it is a  Polish space.
In the same manner, we think of  
$2^\Psi$ and $[\Psi]^\omega$.

\begin{prop}
\label{continous_implies_analytic}
Let $X$ be a Polish space and  
$\rho:\mathcal F\to[\Psi]^\omega$
be a partition regular function.
If $\F$ is an analytic subset of $[\Omega]^\omega$ and $\rho$ is a Borel function, then 
\begin{equation*}
    \mathscr{L}(\rho)\subseteq \Sigma^1_1.
\end{equation*}
\end{prop}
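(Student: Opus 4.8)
The plan is to show that, for every sequence $x \in X^\Psi$, the set $\Lambda_x(\rho)$ is analytic; since $\emptyset$ is trivially analytic, this yields $\mathscr{L}(\rho) \subseteq \Sigma^1_1$. Fix such an $x$ and fix a compatible metric $d$ on $X$. The starting observation is that, $X$ being metric, in the definition of a $\rho$-limit point it suffices to test the neighborhoods $B(\eta, 2^{-n})$: indeed, any neighborhood $U$ of $\eta$ contains some ball $B(\eta,2^{-n})$, and a finite set $K$ witnessing the condition for that ball witnesses it for $U$ as well. Hence $\eta \in \Lambda_x(\rho)$ if and only if there exists $F \in \mathcal{F}$ such that
\[
\forall n \in \omega \,\,\, \exists K \in \Fin(\Omega) \,\,\, \forall s \in \rho(F\setminus K): \,\, d(x_s,\eta)<2^{-n}.
\]
Writing $\Phi(\eta,F)$ for the displayed condition, I would set $R=\{(\eta,F)\in X\times\mathcal{F}: \Phi(\eta,F)\}$ and observe that $\Lambda_x(\rho)=\proj_X(R)$. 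Since continuous images — in particular projections — of analytic sets are analytic, the entire argument reduces to proving that $R$ is analytic.

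To analyze the complexity of $R$, I would fix $s\in\Psi$, $n\in\omega$, and a finite $K\subseteq\Omega$, and examine the ``atoms'' of $\Phi$. The set $\{A\in[\Psi]^\omega: s\in A\}$ is clopen, so $\{F\in\mathcal{F}: s\in\rho(F\setminus K)\}$ is Borel in $\mathcal{F}$: the map $F\mapsto F\setminus K$ is continuous on $[\Omega]^\omega$ (it merely erases finitely many coordinates) and carries $\mathcal{F}$ into $\mathcal{F}$ by the closure property in Definition \ref{defi:partitionregular}, so its composition with the Borel map $\rho$ is Borel, and the set in question is the preimage of the above clopen set. On the other hand, $\{\eta\in X: d(x_s,\eta)<2^{-n}\}$ is open. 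Consequently, for fixed $s,n,K$, the set $\{(\eta,F): s\notin\rho(F\setminus K) \text{ or } d(x_s,\eta)<2^{-n}\}$ is Borel in $X\times\mathcal{F}$.

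Now I would assemble $\Phi$ from these atoms along its quantifiers, each of which ranges over a countable index set: the inner universal quantifier over $s\in\Psi$ is a countable intersection, the existential quantifier over $K\in\Fin(\Omega)$ is a countable union, and the outer universal quantifier over $n\in\omega$ is a countable intersection. As these operations preserve Borelness, $R$ is Borel in the subspace $X\times\mathcal{F}$. Each atom is of the form (Borel subset of $X\times[\Omega]^\omega$)$\,\cap\,(X\times\mathcal{F})$, and countable Boolean operations commute with intersecting by $X\times\mathcal{F}$, so $R=B\cap(X\times\mathcal{F})$ for some Borel $B\subseteq X\times[\Omega]^\omega$. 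Since $\mathcal{F}$ is analytic, $X\times\mathcal{F}$ is analytic, and the intersection of a Borel set with an analytic set is analytic (see e.g.~\cite{MR1321597}). Thus $R$ is analytic, and $\Lambda_x(\rho)=\proj_X(R)\in\Sigma^1_1$, as desired.

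The one genuinely delicate point — and the step I would treat most carefully — is the interaction between $\rho$ being only \emph{Borel} and defined only on the \emph{analytic} (not necessarily Borel) domain $\mathcal{F}$: one cannot pretend $\rho$ is a globally defined Borel object, so the Borel computation must be carried out \emph{relative to} $\mathcal{F}$, with the analytic constraint ``$F\in\mathcal{F}$'' factored out and reinserted exactly once at the end. Everything else is routine bookkeeping of Borel and analytic pointclasses under countable unions, countable intersections, and projections.
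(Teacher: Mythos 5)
Your proof is correct and takes essentially the same route as the paper's: you represent $\Lambda_x(\rho)$ as the projection onto $X$ of the set cut out by the same formula $\forall n\,\exists K\,\forall s\,(s\in\rho(F\setminus K)\Rightarrow d(x_s,\eta)<2^{-n})$, reduce to the same atoms via continuity of $F\mapsto F\setminus K$, Borelness of $\rho$, the clopen set $\{A\in[\Psi]^\omega: s\in A\}$, and the open ball, and conclude using closure of $\Sigma^1_1$ under countable unions, countable intersections, and projections. The only (cosmetic) difference is bookkeeping: the paper certifies each atom $B_{n,K,s}$, $C_{n,K,s}$ as analytic by intersecting with $\cF$ at the atom level, whereas you keep the computation relatively Borel throughout and insert the analytic constraint $X\times\cF$ exactly once at the end --- a slightly cleaner handling of the delicate point that $\rho$ is Borel only on the analytic domain $\cF$.
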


\begin{proof}
Let $d$ be a compatible complete metric on $X$.
Fix a sequence $x=(x_s)_{s\in\Psi}\in X^\Psi$. We need to show that $\Lambda_x(\rho)$ is an analytic subset of $X$. 
Notice that  $\Lambda_{x}(\rho)$ is the projection onto the first coordinate of the set
\begin{equation*}
    A = \left\{(p,F)\in X\times \cF: \forall_{n\in \omega}\, \exists_{K\in [\Omega]^{<\omega}}\, \forall_{s\in \Psi}\, (s\in \rho(F\setminus K)\implies d(p,x_s)<2^{-n})\right\}.
\end{equation*}
Since $A$ is a subset of the Polish space $X\times [\Omega]^\omega$,  the proof will be finished once we show that $A$ is an analytic subset of $X\times [\Omega]^\omega$.
Observe that
\begin{equation*}
    A = \bigcap_{n\in \omega}\bigcup_{K\in [\Omega]^{<\omega}}\bigcap_{s\in \Psi} (B_{n,K,s}\cup C_{n,K,s}),
\end{equation*}
where 
\begin{equation*}
\begin{split}
    B_{n,K,s} &= \{(p,F)\in X\times\cF:s\notin \rho(F\setminus K)\} \,\,\,\text{ and }
    \\
    C_{n,K,s}&=\{(p,F)\in X\times\cF: d(p,x_s)<2^{-n}\}.
\end{split}
\end{equation*}
It is enough to show that the sets $B_{n,K,s}$ and $C_{n,K,s}$ are analytic. 

First, we show that $B_{n,K,s}$ is analytic.
Since 
$B_{n,K,s}=X\times D_{K,s}$, 
where 
$D_{K,s} = \{F\in \cF:s\notin \rho(F\setminus K)\}$, 
the proof will be finished once we show that 
$D_{K,s}$ is analytic.
Notice that 
\begin{equation*}
    \begin{split}
D_{K,s}
&=
\cF\cap \phi^{-1}_K\left[\rho^{-1}\left[\left[\Psi\setminus\{s\}\right]^\omega\right]\right],
    \end{split}
\end{equation*}
where $\phi_K:[\Omega]^\omega\to [\Omega]^\omega$ is given by $\phi_K(A)=A\setminus K$.
Since 
$\cF$ is an analytic set, 
$\phi_K$  is a continuous function, $\rho$ is a Borel function,  
and $\left[\Psi\setminus\{s\}\right]^\omega$ is an open subset of $[\Psi]^\omega$, we obtain that $D_{K,s}$ is analytic.


Finally, we show that $C_{n,K,s}$ is analytic.
Notice that 
$C_{n,K,s}=B(x_s,2^{-n})\times \cF$.
Since $B(x_s,2^{-n})$ is open and $\cF$ is analytic, we obtain that $C_{n,K,s}$ is analytic. 
\end{proof}

It is worth noting that the special case of Proposition \ref{continous_implies_analytic} where $\rho=\rho_\I$, for some coanalytic ideal $\I$ on $\omega$, has been already shown in \cite[Proposition 4.1]{MR3883171}, cf. also \cite[Theorem 4.1]{MR4505549} and \cite[Theorem 3.3]{MSP24}.

We are finally ready to prove our main results.
\begin{thm}\label{thm:mainRamsey}
Let $X$ be an uncountable Polish space. Then 
\begin{equation*}
\mathscr{L}(\rr)=\mathscr{L}(\mathcal{R})=\Sigma_1^1. 
\end{equation*}
Moreover, 
for every nonempty analytic set $C\subseteq X$,  there exists a sequence $y\in X^{[\omega]^2}$ such that $C=\Lambda_y(\rr)=\Lambda_y(\mathcal{R})$. 
\end{thm}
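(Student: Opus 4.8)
The plan is to prove the two easy inclusions $\mathscr{L}(\rr),\mathscr{L}(\mathcal R)\subseteq\Sigma^1_1$ by invoking Proposition \ref{continous_implies_analytic}, and then to establish the ``moreover'' part, which supplies the reverse inclusions. For $\rr$ the hypotheses of Proposition \ref{continous_implies_analytic} hold because $\mathcal F=[\omega]^\omega$ is Polish (hence analytic) and $D\mapsto[D]^2$ is continuous. For the ideal $\mathcal R=\I_{\rr}$ I instead apply the proposition to $\rho_{\mathcal R}$, whose domain is $\mathcal R^+=\{S\subseteq[\omega]^2:\exists\text{ infinite }D,\ [D]^2\subseteq S\}$, an analytic subset of $[[\omega]^2]^\omega$, and which is the identity, hence Borel. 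Thus both families lie in $\Sigma^1_1$, and it remains to realize every nonempty analytic $C\subseteq X$. By Proposition \ref{prop-lambda-ideal-in-lambda-rho} one always has $\Lambda_y(\mathcal R)\subseteq\Lambda_y(\rr)$, so it suffices to produce a single $y$ with $C\subseteq\Lambda_y(\mathcal R)$ and $\Lambda_y(\rr)\subseteq C$; the two sets are then squeezed to $C$.

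To construct $y$, I would fix a continuous surjection $g\colon\omega^\omega\to C$ (which exists since $C$ is nonempty analytic) and set $q_s=g(s\frown 0^\infty)$ for $s\in\omega^{<\omega}$, so that $q_{\alpha\upharpoonright n}\to g(\alpha)$ for every $\alpha$ by continuity. Fix also a bijection $\omega\ni n\mapsto s_n\in\omega^{<\omega}$ respecting the tree order, meaning $s_i\prec s_j$ (``$s_i$ is a proper initial segment of $s_j$'') implies $i<j$; such an enumeration is obtained by listing $\omega^{<\omega}$ by nondecreasing length. Writing $w^\ast=q_\emptyset=g(0^\infty)\in C$, I define, for each $\{n,m\}\in[\omega]^2$ with $n<m$,
\[
y_{\{n,m\}}=\begin{cases} q_{s_m}&\text{if }s_n\prec s_m,\\ w^\ast&\text{if }s_n,s_m\text{ are }\prec\text{-incomparable,}\end{cases}
\]
noting that $s_m\prec s_n$ cannot occur when $n<m$. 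The point is that on index sets following a single branch the value is read off the \emph{larger} coordinate, while any incomparability is flagged by the constant $w^\ast$.

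For $C\subseteq\Lambda_y(\mathcal R)$, I would fix $g(\alpha)\in C$ and let $D_\alpha=\{n:s_n\prec\alpha\}$ be the infinite set of indices of the initial segments of $\alpha$. Any two elements of $D_\alpha$ index comparable nodes, so on $[D_\alpha]^2$ one has $y_{\{n,m\}}=q_{s_m}=q_{\alpha\upharpoonright\ell}$ with $\ell\to\infty$ as the larger coordinate grows; since $q_{\alpha\upharpoonright\ell}\to g(\alpha)$, for every $\varepsilon$ all pairs with large enough larger coordinate lie within $\varepsilon$ of $g(\alpha)$, which is exactly ordinary convergence of $y\upharpoonright[D_\alpha]^2$. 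Hence $g(\alpha)\in\Lambda_y(\I_{\rr})=\Lambda_y(\mathcal R)$, and as $g$ is onto, $C\subseteq\Lambda_y(\mathcal R)$. For $\Lambda_y(\rr)\subseteq C$, suppose $y\upharpoonright[D]^2$ is $\rr$-convergent to some $\eta$ and split on the tail behaviour of $\{s_n:n\in D\}$. If some tail $\{s_n:n\in D,\ n\ge k\}$ is a $\prec$-chain, then (the nodes being distinct, hence of unbounded length) it determines a branch $\beta$, and on the tail squares $y_{\{n,m\}}=q_{s_m}\to g(\beta)$, so $\eta=g(\beta)\in C$. Otherwise every tail square $[D\setminus k]^2$ contains an incomparable pair, on which $y$ equals $w^\ast$, and $\rr$-convergence forces $\eta=w^\ast\in C$. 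In either case $\eta\in C$.

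Combining the above, $C\subseteq\Lambda_y(\mathcal R)\subseteq\Lambda_y(\rr)\subseteq C$, whence $\Lambda_y(\rr)=\Lambda_y(\mathcal R)=C$; this proves the ``moreover'' statement and, with the two easy inclusions, yields $\mathscr{L}(\rr)=\mathscr{L}(\mathcal R)=\Sigma^1_1$. I expect the main obstacle to be the simultaneous control of the two convergences, which pull in opposite directions: $\mathcal R$-convergence on $[D]^2$ is governed by the \emph{larger} coordinate (ordinary convergence forces each column with large index to cluster at the limit), whereas $\rr$-convergence is governed by the \emph{smaller} coordinate (only the tail squares matter). The design resolves this by letting comparable pairs read off the deeper node---so branch-index-sets converge in the strong $\mathcal R$-sense to the correct point of $C$---while the single flag $w^\ast$ on incomparable pairs is precisely what excludes spurious $\rr$-limits; the decisive mechanism is the chain/non-chain dichotomy for tails of $D$, which channels every $\rr$-limit into either a genuine branch value $g(\beta)$ or the constant $w^\ast$, both lying in $C$.
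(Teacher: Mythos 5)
Your proof is correct, and while the sequence you construct is essentially the same object as the paper's, your verification of the hard inclusion takes a genuinely different route. Both you and the paper index $[\omega]^2$ through an order-respecting bijection with $\omega^{<\omega}$, assign to each comparable pair the value attached to the deeper node, and put one fixed default value on incomparable pairs: the paper's $p_{f(\max\{i,j\})^\frown 0^\infty}$, coming from a Souslin scheme of nonempty closed sets $C_s$ with vanishing diameters, plays the role of your $q_{s_m}=g(s_m{}^\frown 0^\infty)$ coming from a continuous surjection $g:\omega^\omega\to C$, and $p_{0^\infty}$ plays the role of your $w^*$. The divergence is in proving $\Lambda_y(\rr)\subseteq C$: the paper establishes a propagation lemma (its Claim 2: if $[F]^2\subseteq A_s$ then $[F\setminus K]^2\subseteq A_{s^\frown n}$ for some $n$ and finite $K$), uses it to recursively build a branch $x$ with $[F\setminus K_n]^2\subseteq A_{x\restriction n}$ for all $n$, and then exploits closedness of the sets $C_{x\restriction n}$ to force $z=p_x$; you instead use the cleaner dichotomy that a tail of $\{s_n:n\in D\}$ is either a $\prec$-chain --- whose union is a branch $\beta$, with continuity of $g$ identifying the $\rr$-limit as $g(\beta)$ --- or keeps producing incomparable pairs, which pins the limit at $w^*$. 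Your dichotomy is more direct for the Ramsey case (the two conditions ``a tail of $[D]^2$ lies in the comparable pairs'' and ``a tail of the nodes is a chain'' are equivalent, so your chain-union step replaces the paper's level-by-level recursion); the paper's recursion has the advantage of transferring almost verbatim to the Hindman case in Theorem \ref{thm:mainHindman}, where no chain-of-indices shortcut is available. Your handling of the easy inclusions (Proposition \ref{continous_implies_analytic} applied to $\rr$ on $[\omega]^\omega$ and to $\rho_{\mathcal{R}}$ on the analytic set $\mathcal{R}^+$) matches the paper's alternative citation. One small repair: a bijection $\omega\to\omega^{<\omega}$ listing nodes ``by nondecreasing length'' does not exist, since each level $\omega^k$ with $k\geq 1$ is infinite; but bijections with the property you actually use ($s_i\prec s_j$ implies $i<j$) certainly exist --- e.g., pull back along a monotone arithmetic coding of $\omega^{<\omega}$ into $\omega$ --- and this is precisely the assumption the paper makes on its $f$, so nothing else in your argument is affected.
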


\begin{proof}
The inclusion $\mathscr{L}(\rr)\subseteq\Sigma_1^1$ follows from \cite[Proposition 5.2(2)]{FKK23} and Proposition \ref{continous_implies_analytic}. The inclusion $\mathscr{L}(\mathcal{R})\subseteq\Sigma_1^1$ follows from \cite[Proposition~4.1]{MR3883171} (or Proposition \ref{continous_implies_analytic} again
and \cite[Proposition 5.2(5)]{FKK23}). We will show the ``moreover'' part, from which the converse inclusions $\Sigma_1^1\subseteq \mathscr{L}(\rr)$ and $\Sigma_1^1\subseteq \mathscr{L}(\mathcal{R})$ follow (recall that $\emptyset\in \mathscr{L}(\rr)$ and $\emptyset\in \mathscr{L}(\mathcal{R})$ by definition). 

To this aim, fix a nonempty analytic set $C\subseteq X$. 
Thanks to \cite[Theorem~25.7]{MR1321597}, there exists a Souslin scheme $\{C_s: s\in \omega^{<\omega}\}$ of nonempty closed sets such that 
\begin{equation}\label{eq:souslinscheme}
C = \bigcup_{x\in \omega^\omega}\bigcap_{n\in \omega}C_{x\restriction n}.
\end{equation}
In addition, it can be assumed that $C_s\supseteq C_t$ whenever $s\subseteq t$ for all $t,s\in \omega^{<\omega}$ and $\lim_n \mathrm{diam}(C_{x\restriction n})=0$ for all $x \in \omega^\omega$. 
Note that, since $X$ is complete, for each $x\in \omega^\omega$ there exists $p_x\in X$ such that 
$\bigcap_{n} C_{x\restriction n} = \{p_x\}$. 
Hence $C=\{p_x:x\in \omega^\omega\}$.

Fix any bijection $f:\omega\to\omega^{<\omega}$ such that if $s\subseteq t$ then $f^{-1}(s)\leq f^{-1}(t)$. Define 
\begin{equation*}
A_s=\left\{\{i,j\} \in [\omega]^2: i<j\text{ and }s\subseteq f(i)\subseteq f(j)\right\}
\end{equation*}
for each $s\in\omega^{<\omega}$. Observe that $s\subseteq t$ implies $A_t\subseteq A_s$. Moreover, if $n,k\in\omega$ are distinct, then $A_{s^\frown n}\cap A_{s^\frown k}=\emptyset$.

At this point, define the sequence $y=(y_{\{i,j\}})_{\{i,j\}\in[\omega]^2}$ by 
\begin{equation*}
y_{\{i,j\}}=\begin{cases}
    \,p_{0^\infty} &\text{if }\{i,j\}\notin A_\emptyset,\\
    \,p_{f(\max\{i,j\})^\frown 0^\infty} &\text{if }\{i,j\}\in A_\emptyset.
\end{cases}
\end{equation*}

\setcounter{claim}{0}
\begin{claim}\label{claim:inclusionA1}
    $C\subseteq\Lambda_y(\mathcal{R})$.
\end{claim}
\begin{proof}
    Pick $c\in C$. Then $c=p_x$ for some $x\in\omega^\omega$. For each $n\in\omega$, fix $i_n\in\omega$ such that $f(i_n)=x\restriction n$ and define $F=\{i_n:n\in\omega\}$. Note that the properties of $f$ guarantee that $F$ is infinite and that the sequence $(i_n)_{n\in\omega}$ is increasing. 
We claim that for every neighborhood $U$ of $p_x$ there is a finite set $G\subseteq[\omega]^2$ such that $y_{\{i,j\}}\in U$ for all $\{i,j\}\in [F]^2\setminus G$. 

In fact, fix a neighborhood $U$ of $p_x$, and pick a sufficiently large $n \in \omega$ such that $C_{x\restriction n}\subseteq U$. Let $G=\{\{i_m,i_k\}\in[F]^2: \max\{m,k\}\leq n\}$. Then $G$ is finite. Moreover, if $\{i,j\}\in[F]^2\setminus G$, then $\{i,j\}=\{i_m,i_k\}$ for some $m<k$ with $k>n$. Note that $\{i,j\}\in A_\emptyset$. Then 
\begin{equation*}
y_{\{i,j\}}=p_{f(\max\{i,j\})^\frown 0^\infty}=p_{f(i_k)^\frown 0^\infty}=p_{(x\restriction k)^\frown 0^\infty}\in C_{x\restriction k}\subseteq C_{x\restriction n}\subseteq U.
\end{equation*}
Therefore $c=p_x \in \Lambda_y(\mathcal{R})$. 
\end{proof}

\begin{claim}\label{claim:middleA}
For each $s\in\omega^{<\omega}$ and $F\in[\omega]^\omega$ such that $[F]^2\subseteq A_s$, there exist $n\in\omega$ and $K\in[\omega]^{<\omega}$ for which $[F\setminus K]^2\subseteq A_{s^\frown n}$. 
\end{claim}
\begin{proof}
Fix $s\in\omega^{<\omega}$ and $F\in[\omega]^\omega$ such that $[F]^2\subseteq A_s$. Define $K=\{f^{-1}(s)\}$ and pick $m\in F\setminus K$ such that $|f(m)|=\min\{|f(i)|: i\in F\setminus K\}$. Also, let $n\in\omega$ be such that $s^\frown n\subseteq f(m)$ (which is possible since $[F\setminus K]^2\subseteq [F]^2\subseteq A_s$ implies $s\subseteq f(m)$). Now, we are going to show that $[F\setminus K]^2\subseteq A_{s^\frown n}$. 

For, pick $\{i,j\}\in [F\setminus K]^2$ with $i<j$. Since $\{i,j\}\in[F\setminus K]^2\subseteq [F]^2\subseteq A_s$, we get $f(i)\subseteq f(j)$. Observe that $s^\frown n\subseteq f(m)\subseteq f(i)$. Indeed, either $m=i$ (which is trivial) or the choice of $m$ together with $\{m,i\}\in[F\setminus K]^2\subseteq A_s$ give us $f(m)\subseteq f(i)$. Therefore $s^\frown n\subseteq f(m)\subseteq f(i)\subseteq f(j)$, which means that $\{i,j\}\in A_{s^\frown n}$. 
\end{proof}

\begin{claim}\label{claim:inclusionA2}
    $\Lambda_y(\rr)\subseteq C$.
\end{claim}
\begin{proof}
The claim is trivial if $\Lambda_y(\rr)=\emptyset$. Otherwise, fix $z \in \Lambda_y(\rr)$ and pick $F\in[\omega]^\omega$ such that $(y_{\{i,j\}})_{\{i,j\}\in[F]^2}$  
is $\rr$-convergent to $z$. 

First, suppose that $[F\setminus K]^2\setminus A_\emptyset\neq\emptyset$ for all $K\in[\omega]^{<\omega}$. Then $z=p_{0^\infty} \in C$. Indeed, otherwise, we could find a neighborhood $U$ of $z$ such that $p_{0^\infty}\notin U$. Then there is a finite $K\in[\omega]^{<\omega}$ such that $y_{\{i,j\}}\in U$ for all $\{i,j\}\in[F\setminus K]^2$. However, by our assumption, there is some $\{i,j\}\in[F\setminus K]^2\setminus A_\emptyset$, so for that $\{i,j\}$ we have $y_{\{i,j\}}=p_{0^\infty}\notin U$, which gives a contradiction.

Henceforth, assume that $[F\setminus K]^2\subseteq A_\emptyset$ for some $K\in[\omega]^{<\omega}$. Then applying Claim \ref{claim:middleA} we can recursively define $x\in\omega^\omega$ such that for each $n\in\omega$ there is $K_n$ with $[F\setminus K_n]^2\subseteq A_{x\restriction n}$. We will show that $z=p_x\in C$. 
Indeed, suppose for the sake of contradiction that $z\neq p_x$. Then there is $n\in\omega$ such that $z\notin C_{x\restriction n}$. Then $p_x\in C_{x\restriction n}$ and, since $C_{x\restriction n}$ is closed, $X\setminus C_{x\restriction n}$ is an open neighborhood of $z$. Hence, there exists $K\in[\omega]^{<\omega}$ such that $y_{\{i,j\}}\notin C_{x\restriction n}$ for all $\{i,j\}\in[F\setminus K]^2$. However, $F\setminus (K\cup K_n)\in[\omega]^\omega$ and for each $i<j$ with $\{i,j\}\in [F\setminus (K\cup K_n)]^2\subseteq A_{x\restriction n}$ we have $y_{\{i,j\}}=p_{f(j)^\frown 0^\infty}\in C_{f(j)}\subseteq C_{x\restriction n}$. This is a contradiction.
\end{proof} 

Putting together Claim \ref{claim:inclusionA1} and Claim \ref{claim:inclusionA2}, and using  $\Lambda_y(\mathcal{R})\subseteq \Lambda_y(\rr)$ (by Proposition \ref{prop-lambda-ideal-in-lambda-rho}), we conclude that $C\subseteq \Lambda_y(\mathcal{R}) \subseteq \Lambda_y(\rr)\subseteq C$. Therefore they coincide. 
\end{proof}

\begin{thm}\label{thm:mainHindman}
Let $X$ be an uncountable Polish space. Then
\begin{equation*}
\mathscr{L}(\FS)=\mathscr{L}(\mathcal{H})=\Sigma_1^1.
\end{equation*}
Moreover, for every nonempty analytic set $C\subseteq X$,  there exists a sequence $y\in X^{\omega}$ such that $C=\Lambda_y(\FS)=\Lambda_y(\mathcal{H})$. 
\end{thm}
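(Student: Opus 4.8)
The plan is to follow the blueprint of the proof of Theorem~\ref{thm:mainRamsey} as closely as possible, replacing the pair map $\rr$ by the finite-sum map $\FS$. The two easy inclusions $\mathscr{L}(\FS)\subseteq\Sigma_1^1$ and $\mathscr{L}(\mathcal{H})\subseteq\Sigma_1^1$ follow at once from Proposition~\ref{continous_implies_analytic} together with \cite[Proposition~5.2]{FKK23} (which gives that $\FS$ is a Borel function on the Polish domain $[\omega]^\omega$), respectively from \cite[Proposition~4.1]{MR3883171}, exactly as in the Ramsey case. Since $\emptyset\in\mathscr{L}(\FS)$ and $\emptyset\in\mathscr{L}(\mathcal{H})$ by definition, and $\Lambda_y(\mathcal{H})\subseteq\Lambda_y(\FS)$ by Proposition~\ref{prop-lambda-ideal-in-lambda-rho}, it suffices to prove the ``moreover'' part: given a nonempty analytic $C\subseteq X$, produce $y\in X^\omega$ with $C\subseteq\Lambda_y(\mathcal{H})$ and $\Lambda_y(\FS)\subseteq C$, so that $C\subseteq\Lambda_y(\mathcal{H})\subseteq\Lambda_y(\FS)\subseteq C$.

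I would reuse the set-up of Theorem~\ref{thm:mainRamsey} verbatim: a Souslin scheme $\{C_s:s\in\omega^{<\omega}\}$ of nonempty closed sets, $\subseteq$-nested with $\lim_n\diam(C_{x\restriction n})=0$, so that $\bigcap_n C_{x\restriction n}=\{p_x\}$ and $C=\{p_x:x\in\omega^\omega\}$, together with a bijection $f\colon\omega\to\omega^{<\omega}$ satisfying $s\subseteq t\Rightarrow f^{-1}(s)\le f^{-1}(t)$. The new ingredient is to code a natural number by its binary expansion. For $N\ge 1$ write $\beta(N)=\{a_1<\cdots<a_k\}$ for the positions of the nonzero binary digits of $N$, and call $N$ \emph{chain-good} if $f(a_1)\subseteq f(a_2)\subseteq\cdots\subseteq f(a_k)$. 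Put $A_s=\{N\ge 1: N\text{ chain-good and } s\subseteq f(a_1)\}$, and define $y_0=p_{0^\infty}$ and, for $N\ge1$, $y_N=p_{f(\max\beta(N))^\frown 0^\infty}$ if $N$ is chain-good and $y_N=p_{0^\infty}$ otherwise; note $N\in A_s$ forces $y_N\in C_{f(\max\beta(N))}\subseteq C_s$. For the forward inclusion $C\subseteq\Lambda_y(\mathcal{H})$, fix $p_x\in C$, set $i_n=f^{-1}(x\restriction n)$ and $D_x=\{2^{i_n}:n\in\omega\}$. As $f$ is monotone the $i_n$ increase, so each $N\in\FS(D_x)$ is a sum of distinct powers of two with $\beta(N)\subseteq\{i_n:n\}$ whose digit positions map under $f$ onto an initial chain of $x$; hence $N$ is chain-good and $y_N=p_{(x\restriction n^*)^\frown 0^\infty}\in C_{x\restriction n^*}$, where $n^*$ is the level of $\max\beta(N)$. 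Since $\max\beta(N)\to\infty$ as $N\to\infty$ in $\FS(D_x)$ we get $n^*\to\infty$ and $y_N\to p_x$, so $\FS(D_x)\in\mathcal{H}^+$ witnesses $p_x\in\Lambda_y(\mathcal{H})$; this part is clean and parallels Claim~\ref{claim:inclusionA1}.

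For the reverse inclusion $\Lambda_y(\FS)\subseteq C$, fix $\eta$ and $D$ with $(y_N)_{N\in\FS(D)}$ being $\FS$-convergent to $\eta$. If for every finite $K$ some $N\in\FS(D\setminus K)$ fails to be chain-good, then $p_{0^\infty}$ occurs in every tail and $\FS$-convergence forces $\eta=p_{0^\infty}\in C$. Otherwise a tail $\FS(D_0)$ consists of chain-good numbers. Here I would exploit that $\FS$-convergence is inherited by every infinite $D'\subseteq D_0$ (because $\FS(D'\setminus K)\subseteq\FS(D_0\setminus K)$), and pass to a \emph{carry-free} $D'$, i.e.\ one with pairwise disjoint binary supports. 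On such a $D'$ the supports add without interaction, so $\beta(\sum_{d\in\alpha}d)=\bigsqcup_{d\in\alpha}\beta(d)$, and chain-goodness of \emph{all} finite sums forces $\{f(a):a\in\bigcup_{d\in D'}\beta(d)\}$ to be a single $\subseteq$-chain, hence a branch $x$. Then $y_N\in C_{x\restriction\ell(N)}$ with $\ell(N)\to\infty$, and for each $n$ and each neighborhood $U$ of $\eta$ the $\FS$-convergence produces $N$ in the tail with $y_N\in U\cap C_{x\restriction n}$; closedness of $C_{x\restriction n}$ gives $\eta\in\bigcap_n C_{x\restriction n}=\{p_x\}$, so $\eta=p_x\in C$. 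This is the $\FS$-analogue of Claim~\ref{claim:inclusionA2}, with the recursion replaced by the single branch read off from $D'$.

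The main obstacle is precisely the reduction to the carry-free case, and this is where $\FS$ genuinely departs from $\rr$: a pair $\{i,j\}$ carries its maximum $j$ for free, whereas a finite sum $\sum_{n\in\alpha}n$ hides $\max\alpha$ because binary addition produces carries, so the naive analogue of the pivotal Claim~\ref{claim:middleA} can fail. I would obtain $D'$ by a $\Delta$-system (or Ramsey) argument applied to the supports $\beta(d)$, reducing to a common core $Y$ with pairwise disjoint petals above it; the crux is then to show that chain-goodness of \emph{all} sums forces $Y$ to be trivial, for otherwise the varying low-order carries coming from the term $\lvert\alpha\rvert\cdot\sum_{b\in Y}2^{b}$ would eventually destroy chain-goodness of some sum. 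Once the core is eliminated, $D'$ is carry-free and the argument of the previous paragraph applies. Controlling these carries—equivalently, proving the correct $\FS$-version of Claim~\ref{claim:middleA}—is the one genuinely new technical point; everything else transfers \emph{mutatis mutandis} from the proof of Theorem~\ref{thm:mainRamsey}.
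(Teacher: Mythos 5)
Your architecture is the right one and matches the paper's (Souslin scheme with vanishing diameters, default value $p_{0^\infty}$, chain-coding of nodes, the same dichotomy in the reverse inclusion), and your forward inclusion $C\subseteq\Lambda_y(\mathcal H)$ via $D_x=\{2^{i_n}:n\in\omega\}$ is correct. But the step you yourself flag as ``the one genuinely new technical point'' is a genuine gap, and it fails in two distinct ways. First, the $\Delta$-system lemma only yields an infinite $\Delta$-subsystem from families of finite sets of \emph{bounded} cardinality; the binary supports $\beta(d)$ for $d\in D_0$ can have unbounded size, and e.g.\ a nested family $\beta(d_n)=\{0,1,\dots,n\}$ admits no infinite (even three-element) $\Delta$-subsystem, so the reduction to a core-plus-disjoint-petals configuration is unavailable. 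Second, and more fundamentally, your plan extracts a carry-free $D'$ as a \emph{subset} of $D_0$, and such a subset need not exist at all: if every element of $D_0$ is odd, then every two supports meet at position $0$, so no two-element subfamily is carry-free, and no amount of thinning helps. The heuristic that ``chain-goodness of all sums forces the core $Y$ to be trivial'' is exactly the content that would have to be proved, and no argument is given; whether the low-order carries from $|\alpha|\cdot\sum_{b\in Y}2^b$ destroy chain-goodness depends on how the bijection $f$ places the shifted positions, which your construction does not control.

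The gap is fixable, in two ways. The paper's way is to sidestep binary carries entirely: by \cite[Lemma 2.2]{MR4356195} one fixes a ``very sparse'' set $D$ (unique representations $\alpha_D(a)$, and sums of overlapping $D$-blocks fall outside $\FS(D)$), indexes the coding by $f\colon D\to\omega^{<\omega}$, and defines $y_i$ through $\alpha_D(i)$; sparseness then makes any $F$ with $\FS(F)\subseteq A_\emptyset$ automatically ``carry-free in base $D$'' (distinct elements of $F$ have disjoint $D$-supports), which is what lets Claim \ref{claim:middleB} — the exact analogue of Claim \ref{claim:middleA} — go through, with the recursion along a branch just as in Theorem \ref{thm:mainRamsey}. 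Alternatively, your binary approach can be salvaged by replacing subset-extraction with a \emph{sum-subsystem}: every IP-set contains multiples of $2^M$ for every $M$ (take $2^M$ elements congruent mod $2^M$ and sum them), so one can recursively choose $e_{n+1}\in\FS$ of a tail of $D_0$ divisible by $2^{M_n+1}$ with $M_n=\max\beta(e_n)$, obtaining $E=\{e_n:n\in\omega\}$ with pairwise disjoint binary supports and $\FS(E)\subseteq\FS(D_0)$; since each $e_n$ uses $D_0$-elements from disjoint tails, the $\FS$-convergence along $\FS(D_0)$ transfers to $\FS(E)$, and then your chain-reading argument and the identification $\eta=p_x$ work as you wrote them. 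Either repair requires an idea your proposal is missing; as it stands, the reverse inclusion $\Lambda_y(\FS)\subseteq C$ is not established.
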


\begin{proof}
The first part proceeds verbatim as in the proof of Theorem \ref{thm:mainRamsey}, replacing $\rr$ and $\mathcal{R}$ with $\FS$ and $\mathcal{H}$, respectively. Fix a nonempty analytic set $C\subseteq X$ and let $\{C_s: s \in \omega^{<\omega}\}$ be a Souslin scheme as in \eqref{eq:souslinscheme}, with the same properties. Similarly, we need to show that there exists $y \in X^\omega$ such that $C=\Lambda_y(\FS)=\Lambda_y(\mathcal{H})$. 

Thanks to \cite[Lemma 2.2]{MR4356195}, it is possible to fix an infinite set $D\subseteq \omega$ which is ``very sparse'', that is
\begin{itemize}
    \item [(a)] for each $a\in\FS(D)$ there is a unique nonempty finite set $\alpha_D(a)\in[D]^{<\omega}$ such that $a=\sum_{i\in \alpha_D(a)}i$;
    \item [(b)] if $G,H\in [D]^{<\omega}$ are nonempty and $G\cap H\neq\emptyset$, then $\sum_{i\in G}i+\sum_{i\in H}i\notin\FS(D)$.
\end{itemize}
Fix a bijection $f:D\to\omega^{<\omega}$ such that if $s\subseteq t$ then $f^{-1}(s)\leq f^{-1}(t)$, and set 
\begin{equation*}
A_s=\bigcup_{m \in \omega}\left\{\sum_{i=0}^m k_i: 
k_0,\ldots,k_m \in D, \, k_0<\cdots<k_m \text{ and }s\subseteq f(k_0)\subseteq \cdots \subseteq f(k_{m})\right\}
\end{equation*}
for each $s\in\omega^{<\omega}$. 
Observe that $A_s\subseteq \FS(D)$, and that  $s\subseteq t$ implies $A_t\subseteq A_s$. Moreover, since $D$ is very sparse, we have $A_{s^\frown n}\cap A_{s^\frown k}=\emptyset$ for all distinct $n,k \in \omega$.

At this point, define a sequence $y=(y_{i})_{i\in\omega}$ by 
\begin{equation*}
y_{i}=\begin{cases}
    \,p_{0^\infty} &\text{if }i\notin A_\emptyset,\\
    \,p_{f(\max \alpha_D(i))^\frown 0^\infty} &\text{if }i\in A_\emptyset.
\end{cases}
\end{equation*}

\setcounter{claim}{0}
\begin{claim}\label{claim:inclusionB1}
    $C\subseteq\Lambda_y(\mathcal{H})$.
\end{claim}

\begin{proof}
    Pick $c\in C$. Then $c=p_x$ for some $x\in\omega^\omega$. For each $n\in\omega$,  fix $i_n\in D$ such that  $f(i_n)=x\restriction n$ and define $F=\{i_n:n\in\omega\}$. Note that the properties of $f$ guarantee that $F$ is infinite and $(i_n)_{n\in\omega}$ is increasing. 
    We claim that for every neighborhood $U$ of $p_x$ there is $G\in  
     [\omega]^{<\omega}$ such that $y_{i}\in U$ for all $i\in \FS(F)\setminus G$. 

     In fact, fix a neighborhood $U$ of $p_x$, and pick a sufficiently large $n \in \omega$ such that $C_{x\restriction n}\subseteq U$. 
     Let $G=\{i\in\FS(D): \max\alpha_D(i)\leq i_n\}$. Then $G$ is finite. Moreover, if $i\in \FS(F)\setminus G\subseteq\FS(D)\setminus G$, then $i_k=\max\alpha_D(i)>i_n$ for some $k\in\omega$, $k>n$ as $(i_j)_{j\in\omega}$ is increasing and $D$ is very sparse. Note that $i\in A_\emptyset$. Hence 
     $$
     y_{i}=p_{f(\max\alpha_D(i))^\frown 0^\infty}=p_{f(i_k)^\frown 0^\infty}=p_{(x\restriction k)^\frown 0^\infty}\in C_{x\restriction k}\subseteq C_{x\restriction n}\subseteq U.
     $$
Therefore $c=p_x \in \Lambda_y(\mathcal{H})$. 
\end{proof}

\begin{claim}\label{claim:middleB}
    For each $s\in\omega^{<\omega}$ and $F\in[\omega]^\omega$ such that $\FS(F)\subseteq A_s$, 
    there exist $n\in\omega$ and $K\in[\omega]^{<\omega}$ for which $\FS(F\setminus K)\subseteq A_{s^\frown n}$. 
\end{claim}
\begin{proof}
    Fix $s\in\omega^{<\omega}$ and $F\in[\omega]^\omega$ such that $\FS(F)\subseteq A_s$. Observe that $F\subseteq\FS(F)\subseteq A_s\subseteq\FS(D)$. Moreover, if $a,b\in F$ are distinct, then $\alpha_D(a)\cap\alpha_D(b)=\emptyset$. Indeed, otherwise,  $a+b\in\FS(F)\subseteq A_s\subseteq\FS(D)$, which would contradict the fact that $D$ is very sparse. 

Hence, by the above observation, $K=\{a\in F:f^{-1}(s)\in\alpha_D(a)\}$ has at most one element. In particular, $K$ is finite. Pick $e\in \bigcup_{a\in F\setminus K}\alpha_D(a)$ such that $|f(e)|=\min\{|f(i)|: i\in\bigcup_{a\in F\setminus K}\alpha_D(a)\}$. Let $c\in F\setminus K$ be such that $e\in\alpha_D(c)$ (note that such $c$ is unique) and $n\in\omega$ be such that $s^\frown n\subseteq f(e)$ (such $n$ exists by the definition of $K$ and an observation that $e\in\alpha_D(c)$ implies $s\subseteq f(e)$, as $c\in \FS(F)\subseteq A_s$ and $D$ is very sparse). Thus, we are left to show that $\FS(F\setminus K)\subseteq A_{s^\frown n}$.

Fix distinct $a_0,a_1,\ldots,a_m\in F\setminus (K\cup\{c\})$. To complete the proof, we need to prove that $c\in  A_{s^\frown n}$, $\sum_{i\leq m}a_i\in A_{s^\frown n}$, and $c+\sum_{i\leq m}a_i\in A_{s^\frown n}$. Let $(d_i)_{i\leq k}\in D^{k+1}$ be an increasing enumeration of the set $\alpha_D(c)\cup\bigcup_{i\leq m}\alpha_D(a_i)$. Since $\alpha_D(a)\cap\alpha_D(b)=\emptyset$ for all distinct $a,b\in F$, we have 
\begin{equation*}
\sum_{i\leq k} d_i=c+\sum_{i\leq m}a_i\in\FS(F\setminus K)\subseteq\FS(F)\subseteq A_s.
\end{equation*}
Since $D$ is very sparse and $d_0<\cdots<d_k$ in $D$, we get $s\subseteq f(d_0)\subseteq f(d_1)\subseteq\cdots\subseteq f(d_k)$. 
In particular, $f(d_0)\subseteq f(e)$ (because $e=d_j$ for some $j$). By the choice of $e$, we have also $|f(e)|\le |f(d_0)|$, hence $|f(e)|=|f(d_0)|$ and thus $f(e)=f(d_0)$. Since $f$ is injective, we obtain $e=d_0$.  
Hence also $s^\frown n\subseteq f(e)=f(d_0)\subseteq f(d_1)\subseteq\cdots\subseteq f(d_k)$. This shows that $c+\sum_{i\leq m}a_i\in A_{s^\frown n}$. However, since the increasing enumerations of $\alpha_D(c)$ and of $\bigcup_{i\leq m}\alpha_D(a_i)$ are some subsequences of $(d_i)_{i\leq k}$, we can also conclude that $c\in  A_{s^\frown n}$ and $\sum_{i\leq m}a_i\in A_{s^\frown n}$. 
\end{proof}

\begin{claim}\label{claim:inclusionB2}
    $\Lambda_y(\FS)\subseteq C$.
\end{claim}
\begin{proof}
The claim is trivial if $\Lambda_y(\FS)=\emptyset$. Otherwise, fix $z \in \Lambda_y(\FS)$ and pick $F\in[\omega]^\omega$ such that $(y_{i})_{i\in\FS(F)}$  
is $\FS$-convergent to $z$. 

First, suppose that $\FS(F\setminus K)\setminus A_\emptyset\neq\emptyset$ for all $K\in[\omega]^{<\omega}$. Then $z=p_{0^\infty}$. Indeed, in the opposite, we could find a neighborhood $U$ of $z$ such that $p_{0^\infty}\notin U$. Then there is a finite $K\in[\omega]^{<\omega}$ such that $y_{i}\in U$ for all $i\in\FS(F\setminus K)$. However, by our assumption, there is some $i\in\FS(F\setminus K)\setminus A_\emptyset$, so for that $i$ we have $y_{i}=p_{0^\infty}\notin U$, which gives a contradiction.

Hence, assume hereafter that $\FS(F\setminus K)\subseteq A_\emptyset$ for some $K\in[\omega]^{<\omega}$. Then applying Claim \ref{claim:middleB} we can recursively define $x\in\omega^\omega$ such that for each $n\in\omega$ there is $K_n$ with $\FS(F\setminus K_n)\subseteq A_{x\restriction n}$. We will show that $z=p_x\in C$. 
Indeed, suppose for the sake of contradiction that $z\neq p_x$. Then there is $n\in\omega$ such that $z\notin C_{x\restriction n}$. Then $p_x\in C_{x\restriction n}$ and, since $C_{x\restriction n}$ is closed, $X\setminus C_{x\restriction n}$ is an open neighborhood of $z$. Hence, there exists $K\in[\omega]^{<\omega}$ such that $y_{i}\notin C_{x\restriction n}$ for all $i\in\FS(F\setminus K)$. However, $F\setminus (K\cup K_n)\in[\omega]^\omega$ and each $i\in \FS(F\setminus (K\cup K_n))\subseteq A_{x\restriction n}$ is of the form $\sum_{j=0}^m k_j$ for some increasing finite sequence $(k_j)_{j\leq m}\in D^{m+1}$ such that $x\restriction n\subseteq f(k_0)\subseteq f(k_1)\subseteq\cdots\subseteq f(k_m)$. Thus, $y_i=p_{f(k_m)^\frown 0^\infty}\in C_{f(k_m)}\subseteq C_{x\restriction n}$. This is the claimed contradiction. 
\end{proof}

The conclusion follows from putting together Claim \ref{claim:inclusionB1}, Claim \ref{claim:inclusionB2}, and the inclusion  $\Lambda_y(\mathcal{H})\subseteq \Lambda_y(\FS)$ (by Proposition \ref{prop-lambda-ideal-in-lambda-rho}), as in the proof of Theorem \ref{thm:mainRamsey}. 
\end{proof}


\bibliographystyle{amsplain}
\bibliography{rho-references}

@article {MSP24,
    AUTHOR = {Balcerzak, M. and G\l{}\c{a}b, S. and Leonetti, P.},
     TITLE = {Topological complexity of ideal limit points},
PAGES = {1-23}, 
   JOURNAL = {\url{https://arxiv.org/abs/2407.12160}},
year={2024},
}

@article {FKL24,
    AUTHOR = {Filip\'{o}w, R. and Kwela, A. and Leonetti, P.},
     TITLE = {Borel complexity of sets of ideal limit points},
     PAGES = {1-44},
   JOURNAL = {\url{https://arxiv.org/abs/2411.10866}},
year={2025},
}

@article {Gosia,
    AUTHOR = {Filip\'{o}w, R. and Kowalczuk, M. and Kwela, A.},
     TITLE = {Critical ideals for countable compact spaces},
     PAGES = {1-19},
   JOURNAL = {\url{https://arxiv.org/abs/2503.12571}},
year={2025},
}

@article {MR4741322,
    AUTHOR = {Bergelson, V. and Zelada, R.},
     TITLE = {Strongly mixing systems are almost strongly mixing of all
              orders},
   JOURNAL = {Ergodic Theory Dynam. Systems},
  FJOURNAL = {Ergodic Theory and Dynamical Systems},
    VOLUME = {44},
      YEAR = {2024},
    NUMBER = {6},
     PAGES = {1489--1530},
      ISSN = {0143-3857,1469-4417},
   MRCLASS = {37A25 (05D10 37A15)},
  MRNUMBER = {4741322},
       DOI = {10.1017/etds.2023.63},
       URL = {https://doi.org/10.1017/etds.2023.63},
}

@article {MR3019575,
    AUTHOR = {Hru\v{s}\'{a}k, M. and Meza-Alc\'{a}ntara, D.},
     TITLE = {Kat\v{e}tov order, {F}ubini property and {H}ausdorff ultrafilters},
   JOURNAL = {Rend. Istit. Mat. Univ. Trieste},
  FJOURNAL = {Rendiconti dell'Istituto di Matematica dell'Universit\`a di
              Trieste. An International Journal of Mathematics},
    VOLUME = {44},
      YEAR = {2012},
     PAGES = {503--511},
      ISSN = {0049-4704},
   MRCLASS = {03E15 (03C20 03H15)},
  MRNUMBER = {3019575},
MRREVIEWER = {M\'{a}rton Elekes},
}

@phdthesis{alcantara-phd-thesis,
    AUTHOR = {Meza-Alc\'{a}ntara, D.},
     TITLE = {Ideals and filters on countable sets},
    SCHOOL = {Universidad Nacional Aut\'{o}noma de M\'{e}xico},
      YEAR = {2009},
note={(\url{https://ru.dgb.unam.mx/handle/DGB_UNAM/TES01000645364})}, 
URL = {https://ru.dgb.unam.mx/handle/DGB_UNAM/TES01000645364},
}

@book {MR0603625,
    AUTHOR = {Furstenberg, H.},
     TITLE = {Recurrence in ergodic theory and combinatorial number theory},
      NOTE = {M. B. Porter Lectures},
 PUBLISHER = {Princeton University Press, Princeton, NJ},
      YEAR = {1981},
     PAGES = {xi+203},
      ISBN = {0-691-08269-3},
   MRCLASS = {28D05 (10K10 10L10 54H20)},
  MRNUMBER = {603625},
MRREVIEWER = {Michael\ Keane},
}

@article {MR0531271,
    AUTHOR = {Furstenberg, H. and Weiss, B.},
     TITLE = {Topological dynamics and combinatorial number theory},
   JOURNAL = {J. Analyse Math.},
  FJOURNAL = {Journal d'Analyse Math\'ematique},
    VOLUME = {34},
      YEAR = {1978},
     PAGES = {61--85},
      ISSN = {0021-7670,1565-8538},
   MRCLASS = {05A17 (10A45 54H20)},
  MRNUMBER = {531271},
MRREVIEWER = {N.\ Hindman},
       DOI = {10.1007/BF02790008},
       URL = {https://doi.org/10.1007/BF02790008},
}

@article {MR2471564,
    AUTHOR = {Fla{\v s}kov\'a, J.},
     TITLE = {Ideals and sequentially compact spaces},
   JOURNAL = {Topology Proc.},
  FJOURNAL = {Topology Proceedings},
    VOLUME = {33},
      YEAR = {2009},
     PAGES = {107--121},
      ISSN = {0146-4124,2331-1290},
   MRCLASS = {54A20},
  MRNUMBER = {2471564},
MRREVIEWER = {Oleg\ G.\ Okunev},
}

@article {MR0349574,
    AUTHOR = {Hindman, N.},
     TITLE = {Finite sums from sequences within cells of a partition of
              {$N$}},
   JOURNAL = {J. Combinatorial Theory Ser. A},
  FJOURNAL = {Journal of Combinatorial Theory. Series A},
    VOLUME = {17},
      YEAR = {1974},
     PAGES = {1--11},
      ISSN = {0097-3165},
   MRCLASS = {10A45 (04A20)},
  MRNUMBER = {349574},
       DOI = {10.1016/0097-3165(74)90023-5},
       URL = {https://doi.org/10.1016/0097-3165(74)90023-5},
}

@article {MR2948679,
    AUTHOR = {Boja{\'n}czyk, M. and Kopczy{\'n}ski, E. and Toru{\'n}czyk,
              S.},
     TITLE = {Ramsey's theorem for colors from a metric space},
   JOURNAL = {Semigroup Forum},
  FJOURNAL = {Semigroup Forum},
    VOLUME = {85},
      YEAR = {2012},
    NUMBER = {1},
     PAGES = {182--184},
      ISSN = {0037-1912,1432-2137},
   MRCLASS = {05D10 (54D30)},
  MRNUMBER = {2948679},
MRREVIEWER = {N.\ Hindman},
       DOI = {10.1007/s00233-012-9404-4},
       URL = {https://doi.org/10.1007/s00233-012-9404-4},
}

@article {MR4356195,
    AUTHOR = {Filip\'{o}w, R.  and Kowitz, K. and Kwela, A. and
              Tryba, J.},
     TITLE = {New {H}indman spaces},
   JOURNAL = {Proc. Amer. Math. Soc.},
  FJOURNAL = {Proceedings of the American Mathematical Society},
    VOLUME = {150},
      YEAR = {2022},
    NUMBER = {2},
     PAGES = {891--902},
      ISSN = {0002-9939},
   MRCLASS = {54A20 (03E35 03E50 05A17 05C55 11B75)},
  MRNUMBER = {4356195},
MRREVIEWER = {Paolo Leonetti},
       DOI = {10.1090/proc/15720},
       URL = {https://doi.org/10.1090/proc/15720},
}

@article {MR4393937,
    AUTHOR = {He, Xi and Zhang, H. and Zhang, S.},
     TITLE = {The {B}orel complexity of ideal limit points},
   JOURNAL = {Topology Appl.},
  FJOURNAL = {Topology and its Applications},
    VOLUME = {312},
      YEAR = {2022},
     PAGES = {Paper No. 108061, 12},
      ISSN = {0166-8641,1879-3207},
   MRCLASS = {40A35 (40A05 54A20 54H05)},
  MRNUMBER = {4393937},
MRREVIEWER = {Martin\ Dole\v{z}al},
       DOI = {10.1016/j.topol.2022.108061},
       URL = {https://doi.org/10.1016/j.topol.2022.108061},
}

@article {MR4505549,
    AUTHOR = {He, Xi and Zhang, H. and Zhang, S.},
     TITLE = {More on ideal limit points},
   JOURNAL = {Topology Appl.},
  FJOURNAL = {Topology and its Applications},
    VOLUME = {322},
      YEAR = {2022},
     PAGES = {Paper No. 108324, 9},
      ISSN = {0166-8641,1879-3207},
   MRCLASS = {40A35 (40A05 54A20 54H05)},
  MRNUMBER = {4505549},
       DOI = {10.1016/j.topol.2022.108324},
       URL = {https://doi.org/10.1016/j.topol.2022.108324},
}

@article {FKK23,
    AUTHOR = {Filip\'{o}w, R. and Kowitz, K. and Kwela, A.},
     TITLE = {A unified approach to {H}indman, {R}amsey and van der {W}aerden spaces},
     PAGES = {1-53},
year={2024},
   JOURNAL = {to appear in J. Symb. Log. (\url{http://doi.org/10.1017/jsl.2024.8})},
FJOURNAL = {The Journal of Symbolic Logic},
}

@article {MR3883171,
    AUTHOR = {Balcerzak, M. and Leonetti, P.},
     TITLE = {On the relationship between ideal cluster points and ideal
              limit points},
   JOURNAL = {Topology Appl.},
  FJOURNAL = {Topology and its Applications},
    VOLUME = {252},
      YEAR = {2019},
     PAGES = {178--190},
      ISSN = {0166-8641},
   MRCLASS = {40A35 (11B05 40A05 54A20)},
  MRNUMBER = {3883171},
       DOI = {10.1016/j.topol.2018.11.022},
       URL = {https://doi.org/10.1016/j.topol.2018.11.022},
}

@article {MR1181163,
    AUTHOR = {Fridy, J.A.},
    TITLE = {Statistical limit points},
   JOURNAL = {Proc. Amer. Math. Soc.},
  FJOURNAL = {Proceedings of the American Mathematical Society},
    VOLUME = {118},
      YEAR = {1993},
    NUMBER = {4},
     PAGES = {1187--1192},
      ISSN = {0002-9939},
   MRCLASS = {40C99},
  MRNUMBER = {1181163},
MRREVIEWER = {A. Peyerimhoff},
       DOI = {10.2307/2160076},
       URL = {http://dx.doi.org/10.2307/2160076},
}

@book {MR1321597,
    AUTHOR = {Kechris, A. S.},
     TITLE = {Classical descriptive set theory},
    SERIES = {Graduate Texts in Mathematics},
    VOLUME = {156},
 PUBLISHER = {Springer-Verlag, New York},
      YEAR = {1995},
     PAGES = {xviii+402},
      ISBN = {0-387-94374-9},
   MRCLASS = {03E15 (03-01 03-02 04A15 28A05 54H05 90D44)},
  MRNUMBER = {1321597},
MRREVIEWER = {Jakub Jasi\'nski},
       DOI = {10.1007/978-1-4612-4190-4},
       URL = {http://dx.doi.org/10.1007/978-1-4612-4190-4},
}

@article {MR1844385,
    AUTHOR = {Kostyrko, P. and \v{S}al\'{a}t, T. and Wilczy\'{n}ski, W.},
     TITLE = {{$\mathcal{I}$}-convergence},
   JOURNAL = {Real Anal. Exchange},
  FJOURNAL = {Real Analysis Exchange},
    VOLUME = {26},
      YEAR = {2000/01},
    NUMBER = {2},
     PAGES = {669--685},
      ISSN = {0147-1937},
   MRCLASS = {54A20},
  MRNUMBER = {1844385},
MRREVIEWER = {K. Chandrasekhara Rao},
}

@article {MR3883170,
    AUTHOR = {Leonetti, P.},
     TITLE = {Invariance of ideal limit points},
   JOURNAL = {Topology Appl.},
  FJOURNAL = {Topology and its Applications},
    VOLUME = {252},
      YEAR = {2019},
     PAGES = {169--177},
      ISSN = {0166-8641},
   MRCLASS = {40A35 (11B05 40A05 54A20)},
  MRNUMBER = {3883170},
MRREVIEWER = {Jacek Tryba},
       DOI = {10.1016/j.topol.2018.11.016},
       URL = {https://doi.org/10.1016/j.topol.2018.11.016},
}

@article {MR3920799,
    AUTHOR = {Leonetti, P. and Maccheroni, F.},
     TITLE = {Characterizations of ideal cluster points},
   JOURNAL = {Analysis (Berlin)},
  FJOURNAL = {Analysis. International Mathematical Journal of Analysis and
              its Applications},
    VOLUME = {39},
      YEAR = {2019},
    NUMBER = {1},
     PAGES = {19--26},
      ISSN = {0174-4747},
   MRCLASS = {40A35 (54A20)},
  MRNUMBER = {3920799},
       DOI = {10.1515/anly-2019-0001},
       URL = {https://doi.org/10.1515/anly-2019-0001},
}

\end{document}